\newtheorem{proposition}{Proposition}[section] 
\newtheorem{defn}[proposition]{Definition}
\newtheorem{thm}[proposition]{Theorem}
\newtheorem{lemma}[proposition]{Lemma}
\theoremstyle{remark}
\newtheorem{remark}[proposition]{Remark}
\newcommand{\A}{\ensuremath{{\mathcal A}}}
\newcommand{\N}{\ensuremath{{\mathbb N}}}
\newcommand{\C}{\ensuremath{{\mathcal C}}}
\newcommand{\R}{\ensuremath{{\mathbb R}}}
\newcommand{\Z}{\ensuremath{{\mathbb Z}}}
\begin{document}
\title{A General Method for Finding the Optimal Threshold in Discrete Time}
\author{S\"oren Christensen\thanks{Universit\"at Hamburg, Department of Mathematics, Research Group Statistics and Stochastic Processes, Bundesstr. 55 (Geomatikum), 20146 Hamburg, Germany} 
, Albrecht Irle\thanks{Christian-Albrechts-Universit\"at, Mathematisches Seminar, Ludewig-Meyn-Str. 4, 24098 Kiel, Germany}}
\date{\today}
\maketitle

\begin{abstract}
We develop an approach for solving one-sided optimal stopping problems in discrete time for general underlying Markov processes on the real line. The main idea is to transform the problem into an auxiliary problem for the ladder height variables. In case that the original problem has a one-sided solution and the auxiliary problem has a monotone structure, the corresponding myopic stopping time is optimal for the original problem as well. This elementary line of argument directly leads to a characterization of the optimal boundary
in the original problem: The optimal threshold is given by the threshold of the myopic stopping time in the auxiliary problem. Supplying also a sufficient condition for our approach to work, we obtain solutions for many prominent examples in the literature, among others the problems of Novikov-Shiryaev, Shepp-Shiryaev, and the American put in option pricing under general conditions. As a further application we show that for underlying random walks (and L\'evy processes in continuous time), general monotone and log-concave reward functions $g$ lead to one-sided stopping problems. 
\end{abstract}

\textbf{Keywords:} {Monotone Stopping Rules; Optimal Stopping; Explicit Solutions; Discrete Time; Novikov-Shiryaev; Threshold Times;\ Myopic Stopping Time} 

\vspace{.2cm}
\textbf{Mathematics Subject Classification:} {60G40; }{62L10; 91G80}

\section{The Basic Setup}\label{sec:setup}
We look at the optimal stopping problem for
\[\rho^ng(Y_n),\;n=0,1,2,\dots,\;0<\rho\leq 1,\;g:\R\rightarrow[0,\infty).\]
Here, $(Y_n)_{n\in\N_0}$ is a Markov process with respect to some filtration $(\A_n)_{n\in\N_0}$, $\A_\infty:=\sigma(\bigcup_{n\in\N_0}\A_n)$, having starting point $y$ with respect to $P_y=P(\cdot|Y_0=y)$. The state space is some subset of $\R$. 

In certain examples of interest, stopping times may take the value $+\infty$ with positive probability, e.g., the optimal stopping time in the well-known Novikov-Shiryaev problem as discussed below. For easier notation, we formally introduce an additional state $\Delta$ (not in $\R$) and set\ $Y_\infty=\Delta,\;g(\Delta)=0,\;\rho^\infty=0.$

We then define 
\begin{align}\label{eq:OSP}
V(y)=\sup_\tau E_y\rho^\tau g(Y_\tau)=\sup_\tau E_y\rho^\tau g(Y_\tau)1_{\{\tau<\infty\}},
\end{align}
where the supremum is taken with respect to all stopping times $\tau$. Define the stopping set
\[S^*=\{y:V(y)=g(y)\}.\]
Then the well-known candidate for an optimal stopping time is
\[\tau^*=\inf\{n:Y_n\in S^*\}\]
with $\inf\emptyset=\infty.$ In our situation ($g\geq0$), $\tau^*$ is optimal if, with respect to every $P_y$, the following condition holds:
\begin{align}\tag{Opt}\label{eq:Opt}
E_y\sup_n\rho^ng(Y_n)<\infty\mbox{ and }\rho^ng(Y_n)\rightarrow0\mbox{ as }n\rightarrow\infty,
\end{align}
see \cite{S}, 2.5. This condition will be fulfilled in our examples under well-known assumptions. Starting from the general theory, it then becomes the main task to determine the set\ $S^*$ as explicitly as possible. 

In particular for underlying random walks (and, resp., L\'evy processes in continuous time), this question has been studied in detail. We just want to mention \cite{DLT} for an early treatment and, more recently, \cite{Mo} for $g(x)=(K-{e}^x)^+$ and \cite{NS,NS2} for $g(x)={(x^+)}^\nu,\;\nu>0$, moreover \cite{AK}, \cite{KyS}, \cite{mishura2013optimal}, and \cite{mordecki2015optimal} for other reward functions. On the other hand, the idea of the particular solutions was generalized to larger classes of reward functions $g$ in \cite{BL2}, \cite{DLU}, \cite{Su}, \cite{MoSa}, \cite{Sa}, \cite{hsiau2014logconcave}, and \cite{boguslavskaya2014method}. All results have in common that the optimal stopping set $S^*$ is a one-sided interval. References for problems with other underlying one-dimensional Markov processes are given in\ Section \ref{sec:ex} below. 

The aim of this paper is to study the problem \eqref{eq:OSP} in a general discrete-time framework and to give an elementary, unifying approach for the treatment of these problems.

\section{An Elementary Approach}
The following approach, called an \emph{elementary approach}, was formulated in \cite{CIN} for the discrete time case for underlying autoregressive processes of order 1 and was taken up by \cite{baurdoux2013direct} for L\'evy processes:

In the first step we show by elementary arguments that $S^*$ is a one-sided interval $[a,\infty)$ or $(a,\infty)$ (resp. $(-\infty,a]$ or $(-\infty,a)$ in the reversed situation, see Remark \ref{rem:reversed}). Let us denote the optimal level by $a^*$. As these elementary arguments alone usually do not show whether $S^*$ is closed or open, we use the short-hand notation $I(a^*)$ for the optimal one-sided interval, so that
\[S^*=I(a^*)\mbox{ with $I(a^*)=[a^*,\infty)$ or $(a^*,\infty)$}.\]
The optimal stopping time then may be written as
\[\tau^*=\inf\{n\geq 0:Y_n\in I(a^*)\}. \]

%
Elementary, of course not being a well-established mathematical term, here means that only a few lines of arguments, combined with, e.g., some common inequalities, are needed to provide the first step. 
It is interesting to note that this is possible for many prominent and new examples. This will be thoroughly treated in our paper and we present a new general result in this respect, see Theorem \ref{thm:optimal} and Appendix \ref{sec:appendix_illust}. As a consequence, we obtain a large class of reward functions $g$ leading to one-sided stopping problems for general underlying random walks: monotone, log-concave functions, see Theorem \ref{prop:RW_optimal}. 

The second step then is to find -- as explicitly as possible -- the threshold $a^*$ such that
\[\tau^*=\inf\{n\geq 0:Y_n\in I(a^*)\}\]
is optimal. 
The second main contribution of this paper is a general method to carry this out. This method is also elementary and gives a unifying way to solve discrete time stopping problems with a one-sided structure. We present this result in Theorem \ref{thm:opt_threshold}. The main idea is to transform the problem into an auxiliary for the ladder height variables. If this auxiliary is a monotone case stopping problem, the corresponding myopic stopping time is optimal for the original problem as well, and the optimal threshold is given by the threshold of the myopic stopping time in the auxiliary problem which has an explicit expression in terms of a function $\phi$ introduced in Definition \ref{defi:phi} below.

We treat a variety of new and well-known examples to illustrate our method in Section \ref{sec:ex}, among others the Novikov-Shiryaev problem, the Russian option and the American put problem.
We also discuss the relation to other approaches in the literature after having established our results, see Subsection \ref{subsec:other_approaches}. 
Although this paper focusses on the discrete time setting, a generalization to continuous time problems is of course also possible. This is discussed in Section \ref{sec:cont_time}.

%

\section{The ladder variable reduction}\label{sec:sec_step}
Let us consider a stopping problem as in Section \ref{sec:setup} with
\[\rho^ng(Y_n),\;n=0,1,2,\dots\]
such that
\[\{y:g(y)>0\}=\{y:y>b\}\mbox{ for some }b\in\R\cup\{-\infty\}.\]
Assume that we have successfully performed the first step, so that
\[S^*=I(a^*),\;\tau^*=\inf\{n\geq0:\,Y_n\in I(a^*)\}.\]
Except for trivial situations we do not stop when $g(y)=0$ and have $V(b)>0$. This implies $a^*>b$ and will be assumed for our discussion. Due to the structure of $S^*=I(a^*)$, with $a^*>b$, we look at ascending ladder variables. We define
\begin{align*}
\sigma_0&=\inf\{j:Y_j>b\},\;\;\;\sigma_0=0\mbox{ if }b=-\infty,\\
\sigma_n&=\inf\{j>\sigma_{n-1}:Y_j>Y_{\sigma_{n-1}}\},\;n\geq 1,\\
\sigma_\infty&=\infty.
\end{align*}
$\sigma_n$ may take the value $+\infty$, and $\sigma_n=\infty$ implies $\sigma_m=\infty$ for $m>n$.  

The following easy result is fundamental for our further considerations. It shows that to find the optimal level\ $a^*$ we only have to solve an auxiliary optimal stopping problem for the ascending ladder process. It should, however, been noted that the distribution of the ladder process is seldom known explicitly. Exceptions are skip-free processes and processes with upward jumps of an exponential- or, more generally, a phase-type-structure, see \cite{asmussen1992phase} for the random walk case and the discussion in Subsection \ref{subsec:exponential_innov} for autoregressive processes. For more general processes, approximations have to be used. The examples in Section \ref{sec:ex} below, however, illustrate that often not the full distribution is needed, but just certain moments. 

\begin{proposition}
	Let the first entrance time into $S^*=I(a^*)$ be optimal and write $\mu^*=\inf\{m\geq 0:Y_{\sigma_m}\in I(a^*)\}$. Then $\tau^*=\sigma_{\mu^*}$ and $\mu^*$ is optimal with regard to the optimal stopping problem for
	\[\rho^{\sigma_m}g(Y_{\sigma_m}),\;m\in\N_0,\]
	with respect to the filtration $(\C_m)_{m\in\N_0},\;\C_m=\A_{\sigma_m}$.
\end{proposition}
\begin{proof}
Entering $I(a^*)$ can only occur at one of the $\sigma_m$, hence
\[\tau^*\in\{\sigma_m:0\leq m\leq \infty\}.\]
On $\{\tau^*<\infty\}\cap\{\tau^*=\sigma_m\}$ it holds that
\[\rho^{\tau^*}g(Y_{\tau^*})=\rho^{\sigma_m}g(Y_{\sigma_m}),\]
on $\{\tau^*=\infty\}$
\[\rho^{\tau^*}g(Y_{\tau^*})=0=\rho^{\sigma_\infty}g(Y_{\sigma_\infty}).\]
So,
\[\tau^*=\sigma_{\mu^*}\mbox{ with }\mu^*=\inf\{m\geq 0:Y_{\sigma_m}\in I(a^*)\}.\]
This proves the claim.
\end{proof}

\begin{remark}\label{rem:reversed}
The arguments so far will apply if the function $g$ is 0 up to $b$ and, in our examples, is increasing after $b$. Now, consider the reversed situation. Let us assume that
\[\{y:g(y)>0\}=\{y:y<b\}\mbox{ for some }b\in\R\cup\{\infty\}.\]
Assume that we have successfully shown that
\[S^*=\{y:y\leq a^*\}\mbox{ or }=\{y:y< a^*\}\mbox{ for some }a^*<b.\]
This corresponds to situations where $g$ is decreasing up to $b$. Now, of course, we have to  look at the descending ladder variables defined via
\begin{align*}
\sigma_0&=\inf\{j:Y_j<b\},\;\;\;\sigma_0=0\mbox{ if }b=\infty,\\
\sigma_n&=\inf\{j>\sigma_{n-1}:Y_j<Y_{\sigma_{n-1}}\},\;n\geq 1,\;\sigma_\infty=\infty
\end{align*}
Then, the reasoning is as before and to find the optimal level $a^*$ we have to solve the optimal stopping problem for the descending ladder variables.
\end{remark}

\section{The Monotone Case Problem}\label{sec:monotone_case}
Most optimal stopping problems are not straightforwardly solved. One of the few exceptions is the 
class of monotone stopping problems as introduced already in \cite{MR0132593,MR0157465}. A long list of examples can be found in \cite{CRS} and, more recently, in \cite{Ferguson}. 

There is no hope that the stopping problem \eqref{eq:OSP} is monotone in most non-trivial situations. However, the stopping problem which we, as argued in Section \ref{sec:sec_step}, now have to solve is given by the reward process
\begin{align}\label{eq:monotone_stopping_prob}
Z_m=\rho^{\sigma_m}g(Y_{\sigma_m}),\;m\in\N_0,
\end{align}
with respect to $(\C_m)_{m\in\N_0}$. We want to take advantage of the monotonicity of the sequence $(Y_{\sigma_m})_{m\in\N_0}$ which hopefully will lead to monotone case optimal stopping problems. So, we look at the conditional expectation 
\[E(Z_{m+1}|\C_m)=E\left(\rho^{\sigma_{m+1}}g(Y_{\sigma_{m+1}})|\A_{\sigma_m}\right).\]
(Here we omit the starting point as the conditional expectation depends only on the value of $Y_{\sigma_m}$.) This has to be compared to 
\[Z_m=\rho^{\sigma_m}g(Y_{\sigma_m}).\]
The monotone case property states that for all $m\geq 0$
\[E(Z_{m+1}|\C_m)\leq Z_m\mbox{ implies }E(Z_{m+2}|\C_{m+1})\leq Z_{m+1}.\]
The myopic stopping time is given as
\[\inf\{m\geq0:E(Z_{m+1}|\C_m)\leq Z_m\}.\]

\begin{defn}\label{defi:phi}
	We write for all $y$
	\[\tau_y=\inf\{n>0:Y_n>y\},\;\phi(y)=E_y\rho^{\tau_y}g(Y_{\tau_y}),\;\;\phi(\Delta)=0.\]
\end{defn}

\begin{proposition}\label{prop:mon_case_markov}
	For all $m\geq 0$
	\[E(Z_{m+1}|\C_m)\leq Z_m\mbox{ iff }\phi(Y_{\sigma_m})\leq g(Y_{\sigma_m}).\]
\end{proposition}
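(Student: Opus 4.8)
The plan is to show directly that the one-step conditional expectation of the ladder reward factorizes through the function $\phi$. Concretely, I would prove that on $\{\sigma_m<\infty\}$ one has
\[
E(Z_{m+1}\mid\C_m)=\rho^{\sigma_m}\,\phi(Y_{\sigma_m}),
\]
and then, since $Z_m=\rho^{\sigma_m}g(Y_{\sigma_m})$ carries the \emph{same} factor $\rho^{\sigma_m}$, which is strictly positive because $0<\rho\le1$ and $\sigma_m<\infty$, the asserted equivalence follows simply by cancelling this positive factor. The event $\{\sigma_m=\infty\}$ is handled separately and trivially: there $Y_{\sigma_m}=\Delta$, so by the conventions $g(\Delta)=\phi(\Delta)=0$ and $\rho^\infty=0$ both $Z_m$ and $E(Z_{m+1}\mid\C_m)$ vanish, and both sides of either inequality reduce to $0\le0$.

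The heart of the matter is the factorization, which I would derive from the strong Markov property of $(Y_n)$ at the stopping time $\sigma_m$. The key observation is that, by the very definition of the ascending ladder variables, the increment $\sigma_{m+1}-\sigma_m$ is precisely the first strictly-positive time at which the shifted process overshoots the level $Y_{\sigma_m}$. Writing $\theta$ for the shift operator and $y=Y_{\sigma_m}$, this reads
\[
\sigma_{m+1}=\sigma_m+\tau_y\circ\theta_{\sigma_m},\qquad Y_{\sigma_{m+1}}=Y_{\tau_y}\circ\theta_{\sigma_m},
\]
with $\tau_y=\inf\{n>0:Y_n>y\}$ exactly the time appearing in Definition \ref{defi:phi}. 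Splitting the discount as $\rho^{\sigma_{m+1}}=\rho^{\sigma_m}\,(\rho^{\tau_y}\circ\theta_{\sigma_m})$, pulling the $\A_{\sigma_m}$-measurable factor $\rho^{\sigma_m}$ out of the conditional expectation, and then applying the strong Markov property yields
\[
E\big(\rho^{\tau_y\circ\theta_{\sigma_m}}\,g(Y_{\tau_y}\circ\theta_{\sigma_m})\mid\A_{\sigma_m}\big)=E_{y}\big[\rho^{\tau_{y}}g(Y_{\tau_{y}})\big]\Big|_{y=Y_{\sigma_m}}=\phi(Y_{\sigma_m}),
\]
which is the desired identity. Note that the possibility $\tau_y=\infty$ (the process never exceeding $y$ again, so $\sigma_{m+1}=\infty$) needs no extra treatment: on that event the integrand is $0$, and the definition of $\phi$ already incorporates this zero contribution.

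The only point demanding genuine care — and thus the main obstacle — is the rigorous use of the strong Markov property at the random time $\sigma_m$. I would first check that each $\sigma_m$ is an $(\A_n)$-stopping time; this follows inductively, since $\sigma_0$ is a first passage time and $\sigma_m$ is a first passage time of the shifted process above the $\A_{\sigma_{m-1}}$-measurable level $Y_{\sigma_{m-1}}$. I would then note that $Y_{\sigma_m}$ is $\C_m=\A_{\sigma_m}$-measurable and that the conditional expectation depends on the past only through $y=Y_{\sigma_m}$, which is what legitimizes the substitution $E_y[\cdots]\big|_{y=Y_{\sigma_m}}$. The small bookkeeping step that makes the increment identification exact is matching the strict inequalities in the two definitions, namely $n>0$ in $\tau_y$ against $j>\sigma_m$ in $\sigma_{m+1}$. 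Once these standard measurability facts are in place, no estimate or limiting argument is required and the equivalence is immediate.
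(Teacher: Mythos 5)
Your proof is correct and takes essentially the same route as the paper's: handle $\{\sigma_m=\infty\}$ trivially via the conventions $g(\Delta)=\phi(\Delta)=0$ and $\rho^\infty=0$, and on $\{\sigma_m<\infty\}$ factor out the $\A_{\sigma_m}$-measurable discount $\rho^{\sigma_m}$, identify $\sigma_{m+1}-\sigma_m$ with the time $\tau_{Y_{\sigma_m}}$ of the shifted process, apply the strong Markov property to get $\phi(Y_{\sigma_m})$, and cancel the common positive factor. The only difference is presentational: you spell out the shift-operator identity, the stopping-time and measurability checks that the paper compresses into ``conditioning on $\A_{\sigma_m}$ we use the Markov property.''
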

\begin{proof}
On $\{\sigma_m=\infty\}$, we have 
\[E\left(\rho^{\sigma_{m+1}}g(Y_{\sigma_{m+1}})|\A_{\sigma_m}\right)=0=\rho^{\sigma_m}g(Y_{\sigma_m})\]
and
\[E\left(\rho^{\sigma_{l+1}}g(Y_{\sigma_{l+1}})|\A_{\sigma_l}\right)=0=\rho^{\sigma_l}g(Y_{\sigma_l})\] 
for all $l>m$. So, on $\{\sigma_m=\infty\}$ we trivially have the monotone case condition. We now argue on $\{\sigma_m<\infty\}$. Then, 
\[E\left(\rho^{\sigma_{m+1}}g(Y_{\sigma_{m+1}})|\A_{\sigma_m}\right)=E\left(\rho^{\sigma_{m+1}-\sigma_m}g(Y_{\sigma_{m+1}})|\A_{\sigma_m}\right)\rho^{\sigma_m}\]
and comparing it with $\rho^{\sigma_m}g(Y_{\sigma_m})$ we may ignore $\rho^{\sigma_m}$ as a common positive factor. Conditioning on $\A_{\sigma_m}$ we use the Markov property. Setting $y=Y_{\sigma_m}$ it follows
\[E\left(\rho^{\sigma_{m+1}-\sigma_m}g(Y_{\sigma_{m+1}})|\A_{\sigma_m}\right)=E_y(\rho^{\tau_y}g(Y_{\tau_y}))=\phi(y).\]
%
 It follows
\[E(Z_{m+1}|\C_m)\leq Z_m\mbox{ iff }\phi(Y_{\sigma_m})\leq g(Y_{\sigma_m}).\]
\end{proof}

Now, the monotonicity of the $Y_{\sigma_m}$ as ascending ladder variables comes into play, noting that $Y_{\sigma_m}>b$ on $\{\sigma_m<\infty\}$. 

\begin{proposition}
Assume that
\begin{align}\tag{M}\label{eq:M}
\phi(z)\leq g(z)\mbox{ implies }\phi(z')\leq g(z')\mbox{ for }z'>z>b.
\end{align}
Then,
\[\phi(Y_{\sigma_m})\leq g(Y_{\sigma_m})\mbox{ implies }\phi(Y_{\sigma_{m+1}})\leq g(Y_{\sigma_{m+1}})\mbox{ for all }m\geq 0\]
and the monotone case property holds for every $P_y$ in the stopping problem for \eqref{eq:monotone_stopping_prob}.
\end{proposition}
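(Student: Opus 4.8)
The plan is to reduce the statement about the ladder variables $Y_{\sigma_m}$ to the monotonicity assumption~\eqref{eq:M} by exploiting that the ascending ladder process is, by construction, nondecreasing. First I would fix $m\geq 0$ and argue under each of the events $\{\sigma_m=\infty\}$ and $\{\sigma_m<\infty\}$ separately, since the implication to be proved is a pointwise (almost-sure) statement under $P_y$. On $\{\sigma_m=\infty\}$ we automatically have $\sigma_{m+1}=\infty$ as well (recall $\sigma_n=\infty$ forces $\sigma_l=\infty$ for $l>n$), so $Y_{\sigma_m}=Y_{\sigma_{m+1}}=\Delta$, and with $\phi(\Delta)=g(\Delta)=0$ from Definition~\ref{defi:phi} and the convention $g(\Delta)=0$, both the hypothesis $\phi(Y_{\sigma_m})\leq g(Y_{\sigma_m})$ and the conclusion $\phi(Y_{\sigma_{m+1}})\leq g(Y_{\sigma_{m+1}})$ read $0\leq 0$ and hold trivially.

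The substantive case is $\{\sigma_m<\infty\}$. Here I would record the key order relation coming from the definition of the ladder times: by construction $Y_{\sigma_{m+1}}>Y_{\sigma_m}$ on $\{\sigma_{m+1}<\infty\}$, and moreover $Y_{\sigma_m}>b$ on $\{\sigma_m<\infty\}$ (since $\sigma_0$ is the first time we exceed $b$ and the ladder values strictly increase thereafter). If $\sigma_{m+1}=\infty$ on this event, then $Y_{\sigma_{m+1}}=\Delta$ and the conclusion is again $0\leq 0$. If $\sigma_{m+1}<\infty$, then setting $z=Y_{\sigma_m}$ and $z'=Y_{\sigma_{m+1}}$ we have exactly $z'>z>b$, so the hypothesis $\phi(z)\leq g(z)$ together with assumption~\eqref{eq:M} yields $\phi(z')\leq g(z')$, i.e.\ $\phi(Y_{\sigma_{m+1}})\leq g(Y_{\sigma_{m+1}})$, as required.

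Having established the pointwise implication $\phi(Y_{\sigma_m})\leq g(Y_{\sigma_m})\Rightarrow\phi(Y_{\sigma_{m+1}})\leq g(Y_{\sigma_{m+1}})$ for every $m\geq 0$, the monotone case property follows by combining it with Proposition~\ref{prop:mon_case_markov}. That proposition identifies $E(Z_{m+1}\mid\C_m)\leq Z_m$ with $\phi(Y_{\sigma_m})\leq g(Y_{\sigma_m})$, so translating the implication just proved at indices $m$ and $m+1$ gives precisely
\[
E(Z_{m+1}\mid\C_m)\leq Z_m\ \Longrightarrow\ E(Z_{m+2}\mid\C_{m+1})\leq Z_{m+1},
\]
which is the definition of the monotone case for the reward process~\eqref{eq:monotone_stopping_prob}. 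Since all the steps hold $P_y$-almost surely for arbitrary starting point $y$, the property holds under every $P_y$.

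I do not expect a serious obstacle here; the argument is essentially bookkeeping. The one point that needs care is the handling of the boundary state $\Delta$ and the implication $\{\sigma_m=\infty\}\subseteq\{\sigma_{m+1}=\infty\}$, so that the degenerate cases are covered and the strict inequality $z'>z$ (needed to invoke~\eqref{eq:M}, which is only assumed for $z'>z>b$) is genuinely available on $\{\sigma_{m+1}<\infty\}$. The only mild subtlety is confirming $Y_{\sigma_m}>b$, which must be extracted from the definition of $\sigma_0$ (and, when $b=-\infty$, holds vacuously) to ensure the hypothesis $z>b$ of~\eqref{eq:M} is met.
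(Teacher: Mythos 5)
Your proof is correct and follows essentially the same route as the paper: handle the degenerate $\Delta$ cases via $\phi(\Delta)=g(\Delta)=0$, use the strict ordering $b<Y_{\sigma_m}<Y_{\sigma_{m+1}}$ on the event where the ladder times are finite to invoke \eqref{eq:M}, and then translate via Proposition \ref{prop:mon_case_markov} into the monotone case property. The only cosmetic difference is that you split first on $\{\sigma_m=\infty\}$ while the paper splits directly on $\{\sigma_{m+1}=\infty\}$ (using that $\sigma_{m+1}<\infty$ forces $\sigma_m<\infty$), which is an equivalent bookkeeping choice.
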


\begin{proof}
Let\ $\phi(Y_{\sigma_m})\leq g(Y_{\sigma_m})$. On $\{\sigma_{m+1}=\infty\}$, trivially
\[\phi(Y_{\sigma_{m+1}})=0= g(Y_{\sigma_{m+1}}).\]
On $\{\sigma_{m+1}<\infty\}$ we have $\sigma_m<\infty$, hence
\[\phi(Y_{\sigma_m})\leq g(Y_{\sigma_m})\mbox{ and }b<Y_{\sigma_m}<Y_{\sigma_{m+1}}\]
implying by \eqref{eq:M} the inequality $\phi(Y_{\sigma_{m+1}})\leq g(Y_{\sigma_{m+1}}).$ Using Proposition \ref{prop:mon_case_markov} this shows the monotone case property.
\end{proof}

So, under \eqref{eq:M}, we have a monotone stopping problem with myopic stopping time
\[\nu^*=\inf\{m\geq 0:\phi(Y_{\sigma_m})\leq g(Y_{\sigma_m})\}.\]
Under well-known conditions, for example under \eqref{eq:Opt}, $\nu^*$ is optimal, and the optimal level of the original problem is given by
\[a^*=\inf\{z:\phi(z)\leq g(z)\}.\]
 This result can be found in the references given above. A recent treatment -- based on the Doob decomposition -- is given in the Appendices to \cite{ChristensenIrle17}. 
 

\begin{thm}\label{thm:opt_threshold}
	Assume \eqref{eq:Opt}, that the optimal stopping set for problem \eqref{eq:OSP} is $S^*=[a^*,\infty)$ or $S^*=(a^*,\infty)$, and assume validity of \eqref{eq:M}. Then 
	\[a^*=\inf\{z>b:\phi(z)\leq g(z)\}.\]
	If $\phi(a^*)\leq g(a^*)$, then $S^*=[a^*,\infty)$, otherwise $S^*=(a^*,\infty)$.
\end{thm}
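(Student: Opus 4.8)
The plan is to assemble the theorem from the pieces already established in Sections \ref{sec:sec_step} and \ref{sec:monotone_case}, so the proof is mostly a matter of chaining the preceding propositions and then identifying the two stopping sets. First I would invoke the ladder reduction: by the Proposition in Section \ref{sec:sec_step}, optimality of $\tau^*=\inf\{n\geq0:Y_n\in I(a^*)\}$ for \eqref{eq:OSP} is equivalent to optimality of $\mu^*=\inf\{m\geq0:Y_{\sigma_m}\in I(a^*)\}$ for the auxiliary reward process $Z_m=\rho^{\sigma_m}g(Y_{\sigma_m})$. Under assumption \eqref{eq:M}, the last Proposition of Section \ref{sec:monotone_case} gives that this auxiliary problem is a monotone case problem for every $P_y$, and then \eqref{eq:Opt}, via the standard monotone-case optimality theorem cited after that Proposition, guarantees that the myopic stopping time $\nu^*=\inf\{m\geq0:\phi(Y_{\sigma_m})\leq g(Y_{\sigma_m})\}$ is optimal for the auxiliary problem.

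The second step is to match the myopic rule $\nu^*$ against the first-entrance rule $\mu^*$ and thereby read off $a^*$. By Proposition \ref{prop:mon_case_markov}, stopping in the myopic rule occurs exactly on the event $\{\phi(Y_{\sigma_m})\leq g(Y_{\sigma_m})\}$; because the ladder values $Y_{\sigma_m}$ are increasing and stay in $(b,\infty)$, and because \eqref{eq:M} makes the set $\{z>b:\phi(z)\leq g(z)\}$ an up-set of the form $I(a^*)$, the event $\{\phi(Y_{\sigma_m})\leq g(Y_{\sigma_m})\}$ coincides with $\{Y_{\sigma_m}\in I(\tilde a)\}$ where $\tilde a:=\inf\{z>b:\phi(z)\leq g(z)\}$. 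Hence $\nu^*$ is precisely the first entrance of the ladder chain into $I(\tilde a)$. Since two first-entrance rules into one-sided intervals that are both optimal for the same auxiliary problem must enter the same region (the ladder chain reaches every threshold with positive probability from suitable starting points, so different thresholds would yield genuinely different, non-equivalent rules), I would conclude $a^*=\tilde a=\inf\{z>b:\phi(z)\leq g(z)\}$.

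Finally I would settle the open-versus-closed dichotomy at the endpoint. The value $a^*$ itself is in $S^*$ iff stopping is at least as good as continuing at $a^*$, i.e.\ iff $\phi(a^*)\leq g(a^*)$; this is exactly the myopic stopping inequality evaluated at the boundary, so $\phi(a^*)\leq g(a^*)$ forces $a^*\in S^*$, giving $S^*=[a^*,\infty)$, whereas $\phi(a^*)>g(a^*)$ places $a^*$ in the continuation region and yields $S^*=(a^*,\infty)$. Here one should note that, by definition of the infimum, $\phi(z)\leq g(z)$ holds for all $z>a^*$ regardless; only the behaviour exactly at $a^*$ distinguishes the two cases.

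The step I expect to be the main obstacle is the identification in the second paragraph, namely arguing rigorously that optimality of the myopic rule $\nu^*$ pins down the threshold of $\mu^*$ to be exactly $\tilde a$. The monotone-case theory delivers optimality of the myopic rule, but to conclude that the \emph{first-entrance} rule $\mu^*$ with threshold $a^*$ coincides with $\nu^*$ one must verify that the two rules actually stop at the same times on a set of full measure under every $P_y$. This requires care when $Y_{\sigma_m}$ can land exactly on the boundary value and when $\sigma_m=\infty$; the absorbing convention $\phi(\Delta)=g(\Delta)=0$ handles the latter, and \eqref{eq:M} together with the definition of $\tilde a$ handles the former, but writing this matching cleanly is where the real content of the argument lies.
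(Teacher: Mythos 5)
Your architecture is the paper's (ladder reduction, monotone case under \eqref{eq:M}, optimality of the myopic rule, endpoint dichotomy), but the step you yourself flag as the main obstacle --- deducing $a^*=\tilde a$ from the simultaneous optimality of the first-entrance rule $\mu^*$ and the myopic rule $\nu^*$ --- is genuinely gapped, and the justification you offer for it fails. Optimal stopping times are not unique: two first-entrance rules with different thresholds can both be optimal (the very dichotomy between $[a^*,\infty)$ and $(a^*,\infty)$ at the endpoint is a symptom of this), so ``different thresholds yield non-equivalent rules, hence cannot both be optimal'' is not a valid inference, and no amount of care about boundary hits or $\sigma_m=\infty$ repairs it. What actually pins down the threshold is not uniqueness of optimal rules but the definition of $S^*$ as $\{y:V(y)=g(y)\}$, i.e.\ the fact that $V(y)>g(y)$ \emph{strictly} for $y\notin S^*$. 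The paper's proof splits the identification into two short steps that your sketch does not contain: first, for every $z>a^*$ one has $\phi(z)\le V(z)=g(z)$ simply because $\tau_z$ is an admissible stopping time, which gives $\tilde a\le a^*$ with no appeal to the auxiliary problem at all; second, if $\tilde a<a^*$, pick $y\in(\tilde a,a^*)$: by \eqref{eq:M} the set $\{z>b:\phi(z)\le g(z)\}$ equals $I(\tilde a)\ni y$, so under $P_y$ the (optimal) myopic rule stops at time $0$, forcing $V(y)=g(y)$ and contradicting $y\notin S^*$. No matching of $\nu^*$ against $\mu^*$ is needed or used.

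Two smaller points in your endpoint paragraph. You assert $a^*\in S^*$ iff ``stopping is at least as good as continuing, i.e.\ iff $\phi(a^*)\le g(a^*)$'', but $\phi(a^*)$ is only the value of the particular continuation rule $\tau_{a^*}$, not the continuation value, so the two directions are not symmetric: the implication $\phi(a^*)>g(a^*)\Rightarrow a^*\notin S^*$ is cheap via $V(a^*)\ge\phi(a^*)>g(a^*)$, while $\phi(a^*)\le g(a^*)\Rightarrow a^*\in S^*$ genuinely requires the optimality of the myopic rule at the starting point $a^*$ (then $\{z>b:\phi(z)\le g(z)\}=[a^*,\infty)$ by \eqref{eq:M} and immediate stopping is optimal for $P_{a^*}$); your text gestures at this but should say it. Finally, $\phi(z)\le g(z)$ for all $z>a^*$ does not follow ``by definition of the infimum'' --- an infimum only yields points arbitrarily close; you need \eqref{eq:M}, or the direct inequality $\phi\le V=g$ on $(a^*,\infty)$ as in the paper.
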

\begin{proof}
	For all\ $z>a^*$ we have $V(z)=g(z)$, hence $\phi(z)\leq V(z)\leq g(z)$, so that
	\[\alpha^*:=\inf\{z>b:\phi(z)\leq g(z)\}\leq a^*.\]
	Now, $\{z>b:\phi(z)\leq g(z) \}=I(\alpha^*),$ and the myopic stopping time 
	\[\nu^*=\inf\{m\geq 0:Y_{\sigma_m}\in I(\alpha^*)\} \]
	is optimal for $(Y_{\sigma_m})_m,$ hence $\sigma^*=\sigma_{\nu^*}$ is optimal in \eqref{eq:OSP} for every $P_y$.\\
	Now assume that $\alpha^*<a^*$ and choose $\alpha^*<y<a^*.$ Then $\sigma^*=0$ is optimal for $P_y$ which contradicts $g(z)<V(z)$ for $z<a^*$.\\
	If $\phi(a^*)\leq g(a^*)$, then $\{z>b:\phi(z)\leq g(z)\}=[a^*,\infty)$, and for $y=a^*$, $\sigma^*=0$ is optimal for $P_{a^*}$, hence $a^*\in S^*$. If $\phi(a^*)>g(a^*)$, then $V(a^*)>g(a^*)$, so that $a^*\not\in S^*$.
\end{proof}

\begin{remark}\label{rem:ratio}
	Note that\ \eqref{eq:M} follows if 
	$\phi/g$ is decreasing on $\{y:y>b\}$ since $g>0$ on this set. From a computational point of view, this is an easy-to-handle situation as the underling function is monotone.
\end{remark}

\begin{remark}
	In the case that $g$ and $\phi$ are continuous, $a^*$ fulfils
	\[E_{a^*}(\rho^{\tau_{a^*}}g(Y_{\tau_{a^*}}))=g(a^*).\]
	Note that this can be interpreted as a continuous fit condition, see \cite{ps} for a discussion of the continuous- and smooth-fit-principle.
\end{remark}


\section{A sufficient condition for optimality of threshold times}
The main aim of this section is to give a sufficient criterion in terms of the function\ $\phi$ to warrant that a threshold time 
	 is optimal for original problem \eqref{eq:OSP}. 
	 As discussed before, there are various results of an elementary nature which show this in special situations, see also the Appendix. Here we shall give a general result for this optimality in terms of the function\ $\phi$ introduced above. 
	 
	 The candidate for the optimal threshold is 
	 \[\alpha^*=\inf\{z:g(z)\geq \phi(z)\}.\]
	 We shall show in the following that the threshold time for the level $\alpha^*$ is in fact optimal under general assumptions, so the $\alpha^*$ will in fact be the $a^*$ of the foregoing parts. 
	 We define
	 \begin{equation}\label{eq:f_def}
	 f(y)=\frac{1}{1-E_y(\rho^{\tau_y})}(g(y)-\phi(y)).
	 \end{equation}
	 For $0<\rho<1$, we have $0<E_y(\rho^{\tau_y})\leq 1$, for $\rho=1$, and using the convention $\rho^\infty=0$ for $\rho=1,$ we have $1-E_y(\rho^{\tau_y})=P_y(\tau_y=\infty)$. For $f$ to be well-defined, we therefore assume for $\rho=1$ that $P_y(\tau_y=\infty)>0$ for all\ $y$. 
	  Furthermore note that $f(y)<0$ for $y<\alpha^*$ by definition of $\alpha^*$.
	 
	 Obviously, \eqref{eq:M} can be reformulated as 
\[f(z)\geq 0\mbox{ implies }f(z')\geq0 \mbox{ for }z'>z>b.\]
Now $\alpha^*=\inf\{z:f(z)\geq 0\}$, so we may have one, or both for $f(\alpha^*)=0$, of the following two cases:
\begin{align}
f(\alpha^*)&\geq 0\tag{A1}\label{eq:A1}\\
f(\alpha^*)&\leq 0\tag{A2}\label{eq:A2}
\end{align}

\begin{thm}\label{thm:optimal}
	Assume \eqref{eq:Opt} and for $\rho=1$ that $P_y(\tau_y=\infty)>0$ for all $y$.
	Assume that 
		 \begin{align}\tag{M1}\label{eq:M1}
\mbox{$f$ is 
increasing for $y\geq \alpha^*$.}
	\end{align}
	\begin{enumerate}[(i)]
		\item Assume \eqref{eq:A1}. Then
		\[\tau^*=\inf\{n\geq 0:Y_n\geq \alpha^*\}\]
		is an optimal stopping time for problem \eqref{eq:OSP}, i.e., $a^*=\alpha^*$.
		\item Assume \eqref{eq:A2}. Then
		\[\tau^*=\inf\{n\geq 0:Y_n>\alpha^*\}\]
		is an optimal stopping time for problem \eqref{eq:OSP}.
	\end{enumerate}
\end{thm}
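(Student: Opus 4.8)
The plan is to treat $\tau^*$ as the myopic (one-sided) stopping time of the ladder problem of Sections~\ref{sec:sec_step}--\ref{sec:monotone_case} and to verify its optimality for \eqref{eq:OSP} by an excessive-majorant argument, since here we may not presuppose that $S^*$ is one-sided and hence cannot simply invoke Theorem~\ref{thm:opt_threshold}.

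First I would check that \eqref{eq:M1} implies \eqref{eq:M}. Since $f<0$ on $\{y<\alpha^*\}$ by the definition of $\alpha^*$ and $f$ is increasing on $[\alpha^*,\infty)$, the set $\{z>b:f(z)\geq0\}$ is an up-set, equal to $[\alpha^*,\infty)$ under \eqref{eq:A1} and to $(\alpha^*,\infty)$ when $f(\alpha^*)<0$; this is exactly \eqref{eq:M} in the reformulated form stated before the theorem. By Proposition~\ref{prop:mon_case_markov} and the proposition following it, the ladder reward $\rho^{\sigma_m}g(Y_{\sigma_m})$ is then a monotone case problem whose myopic stopping time stops precisely when $Y_{\sigma_m}$ first enters $I(\alpha^*)$. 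Because $\alpha^*>b$, a first entrance of $(Y_n)$ into $I(\alpha^*)$ can occur only at a ladder epoch, so this myopic time is realized by $\tau^*=\inf\{n\geq0:Y_n\in I(\alpha^*)\}$, which is $\inf\{n:Y_n\geq\alpha^*\}$ under \eqref{eq:A1} and $\inf\{n:Y_n>\alpha^*\}$ under \eqref{eq:A2}, matching the two candidates in the statement. Under \eqref{eq:Opt} the cited monotone case theory already gives optimality of this time within the ladder filtration.

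The decisive step is to upgrade this to optimality among all $(\A_n)$-stopping times. I would let $W$ be the payoff of the threshold strategy, $W=g$ on $I(\alpha^*)$ and $W(y)=E_y\rho^{\tau^*}g(Y_{\tau^*})$ elsewhere, and prove that $\rho^nW(Y_n)$ is a $P_y$-supermartingale, i.e.\ that $W$ is a $\rho$-excessive majorant of $g$. Granting this, optional stopping together with $W\geq g$ and \eqref{eq:Opt} yields $E_y\rho^\tau g(Y_\tau)\leq E_y\rho^\tau W(Y_\tau)\leq W(y)=E_y\rho^{\tau^*}g(Y_{\tau^*})$ for every $\tau$, so that $W=V$ and $\tau^*$ is optimal; the convention $\rho^\infty g(Y_\infty)=0$ and the decay $\rho^nW(Y_n)\to0$ from \eqref{eq:Opt} cover the event $\{\tau=\infty\}$, while $W\leq V$ supplies the integrability. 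On the continuation region $W$ is $\rho$-harmonic by the Markov property, so the supermartingale inequality is automatic there, and the whole difficulty is localized to the stopping region $I(\alpha^*)$, where it expresses, via Definition~\ref{defi:phi} and \eqref{eq:f_def}, that stopping should not be improved by continuation — the point at which $f\geq0$ on $I(\alpha^*)$, guaranteed by \eqref{eq:M1} with \eqref{eq:A1} or \eqref{eq:A2}, enters.

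The main obstacle is precisely this supermartingale verification on $I(\alpha^*)$, and it is the reason the hypothesis is the full monotonicity \eqref{eq:M1} rather than only the up-set property \eqref{eq:M}: \eqref{eq:M} suffices to solve the ladder problem, but to compare $\tau^*$ with stopping times acting between ladder epochs one needs that a higher current level never makes continuation more attractive, which is encoded in $f$ being increasing. Concretely I expect to run the argument through the Doob decomposition as in the appendices of \cite{ChristensenIrle17}, using that $g$ is non-decreasing on $(b,\infty)$ so that $\rho^ng(Y_n)\leq\rho^{\sigma_m}g(Y_{\sigma_m})$ for $\sigma_m\leq n<\sigma_{m+1}$; this pointwise domination keeps the between-epoch increments of $\rho^nW(Y_n)$ under control while \eqref{eq:M1} governs the increments across epochs. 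Finally, the closed/open dichotomy is read off from $I(\alpha^*)$: under \eqref{eq:A1} one has $\alpha^*\in S^*$ and $a^*=\alpha^*$, whereas under \eqref{eq:A2} with $f(\alpha^*)<0$ the point $\alpha^*$ lies in the continuation region and the strict threshold time is the optimal one.
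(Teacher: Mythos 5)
Your architecture coincides with the paper's: define the value of the threshold strategy (your $W$, the paper's $\tilde V$), get the majorant property $W\geq g$ from optimality of the myopic rule in the monotone ladder problem, observe that $W$ is $\rho$-harmonic below $\alpha^*$ by the strong Markov property, and reduce everything to the one-step excessivity inequality on the stopping region, which by the strong Markov property is exactly $E_y\rho^{\tau'}g(Y_{\tau'})\leq g(y)$ for $y\geq\alpha^*$ with $\tau'=\inf\{n>0:Y_n\geq\alpha^*\}$ (strict inequality in the threshold under \eqref{eq:A2}). You also correctly diagnose why the full monotonicity \eqref{eq:M1}, and not merely \eqref{eq:M}, is needed: after $\tau'$ the process may re-enter $[\alpha^*,y]$ at levels that are not ladder heights of the process started at $y$. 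But at precisely this crux your proposal stops being a proof. The plan ``Doob decomposition as in \cite{ChristensenIrle17} plus the pointwise domination $\rho^ng(Y_n)\leq\rho^{\sigma_m}g(Y_{\sigma_m})$'' has two concrete defects. First, it imports the hypothesis that $g$ is non-decreasing on $(b,\infty)$, which is not among the assumptions of Theorem \ref{thm:optimal} (it enters only later, in the random walk result, Theorem \ref{prop:RW_optimal}); the paper's proof uses no monotonicity of $g$ at all. Second, even granting that hypothesis, crude domination does not close the inequality: on $\{\tau'<\tau_y\}$ it yields at best a bound of the shape $\phi(y)+\rho\, g(y)\,P_y(\tau'<\tau_y)$, which can exceed $g(y)$ whenever $\phi(y)$ is close to $g(y)$. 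Moreover, the appendices of \cite{ChristensenIrle17} are invoked in the paper only for optimality of the myopic rule within the ladder filtration --- the part you already have --- not for the comparison against all $(\A_n)$-stopping times.

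What is actually needed, and what the paper does, is a quantitative use of \eqref{eq:M1}: telescope the difference over the post-$\tau'$ strict ascending epochs $\kappa_0=\tau'$, $\kappa_{n+1}=\inf\{j>\kappa_n:Y_j>Y_{\kappa_n}\}$, obtaining
\[E_y\rho^{\tau'}g(Y_{\tau'})-\phi(y)=\sum_{n=0}^{\infty}E_y\,\rho^{\kappa_n}\bigl(g(Y_{\kappa_n})-\phi(Y_{\kappa_n})\bigr)1_{\{\kappa_n<\tau_y\}},\]
then bound each summand by $f(Y_{\kappa_n})\leq f(y)$ (valid because $\alpha^*\leq Y_{\kappa_n}\leq y$ on $\{\kappa_n<\tau_y\}$), and finally resum the resulting weights using an independent geometric time $T$ with $P(T>n)=\rho^n$ and its memoryless property, which identifies the total weight as $q=P_y(\kappa_0<T\mid\tau_y\geq T)\in[0,1]$. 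This produces the convex combination $E_y\rho^{\tau'}g(Y_{\tau'})\leq q\,g(y)+(1-q)\phi(y)\leq g(y)$, a structure that no pointwise between-epoch domination supplies. The remaining ingredients of your write-up --- that \eqref{eq:M1} implies \eqref{eq:M}, the identification of the myopic ladder time with the threshold time, and the \eqref{eq:A1}/\eqref{eq:A2} dichotomy deciding between $\geq\alpha^*$ and $>\alpha^*$ --- match the paper and are sound.
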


\begin{proof}
	\begin{enumerate}[(i)]
		\item By assumption, the condition \eqref{eq:M} holds, and we have the monotone case problem \eqref{eq:monotone_stopping_prob} with value function
	\[\tilde V(y):=\begin{cases}
g(y),&y\geq \alpha^*,\\
E_y\rho^{\tau^*}g(Y_{\tau^*}),&y<\alpha^*.
\end{cases}\]
With
 \[\tau'=\inf\{n>0:Y_n\geq \alpha^*\}\]
 we have
 \[\tilde V(y)=E_y\rho^{\tau'}g(Y_{\tau'})\mbox{ for }y<\alpha^*.\]
 Due to the optimality in problem \eqref{eq:monotone_stopping_prob}, we have
 \[\tilde V(y)\geq g(y)\mbox{ for all }y,\]
 so that $\tilde V$ is a majorant of $g$. By the general theory (or a short elementary argument), it remains to prove that $\tilde V$ is $\rho$-excessive for $Y$, that is 
 \[E_y(\rho \tilde V(Y_1))\leq \tilde V(y)\mbox{ for all }y.\]
 By the strong Markov property
 \[E_y(\rho \tilde V(Y_1))=E_y\rho^{\tau'}g(Y_{\tau'})\mbox{ for all }y,\]
 so equality $E_y(\rho \tilde V(Y_1))= \tilde V(y)\mbox{ holds for all }y<\alpha^*$. So let $y\geq \alpha^*$. 	Write 
 \begin{align*}
 \kappa_0&=\tau'=\inf\{j>0:Y_j\geq \alpha^*\},\\
 \kappa_n&=\inf\{j>\kappa_{n-1}:Y_j>Y_{\kappa_{n-1}}\},\;n\geq 1.
 \end{align*}
 	Using this, we obtain
 \begin{align*}
 E_y\rho^{\tau'}g(Y_{\tau'})-\phi(y)&= E_y(\rho^{\tau'}g(Y_{\tau'})-\rho^{\tau_{y}}g(Y_{\tau_y}))1_{\{\tau'<\tau_y\}}\\
 &=\sum_{n=0}^\infty E_y(\rho^{\kappa_n}g(Y_{\kappa_n})-\rho^{\kappa_{n+1}}g(Y_{\kappa_{n+1}}))1_{\{\kappa_n<\tau_y\}}	\\
 &=\sum_{n=0}^\infty E_y\rho^{\kappa_n}(g(Y_{\kappa_n})-E_y(\rho^{\kappa_{n+1}-\kappa_n}g(Y_{\kappa_{n+1}})|\mathcal{A}_{\kappa_n}))1_{\{\kappa_n<\tau_y\}}	\\
 &=\sum_{n=0}^\infty E_y\rho^{\kappa_n}(g(Y_{\kappa_n})-\phi({Y_{\kappa_n}}))1_{\{\kappa_n<\tau_y\}}\\
 &\leq \sum_{n=0}^\infty E_y\rho^{\kappa_n}(g(y)-\phi(y))\frac{1-E_{Y_{\kappa_n}}(\rho^{\kappa_1})}{1-E_y(\rho^{\kappa_1})}1_{\{\kappa_n<\tau_y\}},
 \end{align*}
 where we used $\{\kappa_n<\tau_y\}\in\A_{\kappa_n}$ in the third step and  \eqref{eq:M1} in the last step. Note that this also holds on $\{\tau_y=\infty\}$ by \eqref{eq:Opt}.
 		Now, we introduce an additional random variable $T$ independent of everything else with geometric distribution with parameter $1-\rho$, so that $P(T>n)=\rho^n$ for all $n$. In the case $\rho=1$ we use $T\equiv\infty$. We will make use of the memoryless-property of $T$ in the following calculation. 	
 		\begin{align*}
 	E_y\rho^{\tau'}g(Y_{\tau'})-\phi(y)
 	&\leq (g(y)-\phi(y))\frac{1}{P_y(\kappa_1\geq T)}\sum_{n=0}^\infty E_y(1_{\{\kappa_n<T\}}P_y(\kappa_{n+1}\geq T|\A_{\kappa_n})1_{\{\kappa_n<\tau_y\}})\\
 	&=(g(y)-\phi(y))\frac{1}{P_y(\tau_y\geq T)}\sum_{n=0}^\infty P_y(\kappa_n<T\leq \kappa_{n+1}\leq \tau_y)\\
 	&=q(g(y)-\phi(y))
 	\end{align*}
 	with $q=P_y(\kappa_0<T|\tau_y\geq T)\in[0,1]$. Hence, using $\phi(y)\leq g(y)$ for $y\geq \alpha^*$,
 	\[E_y\rho^{\tau'}g(Y_{\tau'})\leq q g(y)+(1-q)\phi(y)\leq g(y).\]
 	\item Under \eqref{eq:A2} simply use 
 	\[\tau'=\inf\{n>0:Y_n>\alpha^*\}=:\nu_0\]
 	and look at $y\leq \alpha^*$ and $y>\alpha^*$. Then, the proof works in the same way. 
	\end{enumerate}

\end{proof}


The following criterion for \eqref{eq:M1} turns out to be useful.
\begin{lemma}\label{lem:suff_M_1}
On the set $[\alpha^*,\infty)$ let the following hold true:
\begin{itemize}
	\item $y\mapsto E_y(\rho^{\tau_y})$ is increasing,
	\item $\phi/g$ is decreasing,
	\item $\phi$ is increasing or $g$ is increasing.
\end{itemize}
 Then, \eqref{eq:M1} holds true.
\end{lemma}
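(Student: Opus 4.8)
The plan is to reduce the statement to a single monotonicity claim about $g-\phi$ and then exploit the product structure of $f$. Writing $h(y)=E_y(\rho^{\tau_y})$, the definition \eqref{eq:f_def} reads $f=(g-\phi)/(1-h)$. By the first bullet $h$ is increasing on $[\alpha^*,\infty)$, and $h<1$ there: for $0<\rho<1$ one has $\tau_y\geq 1$, hence $\rho^{\tau_y}\leq\rho$ and $h\leq\rho<1$, while for $\rho=1$ the standing assumption $P_y(\tau_y=\infty)>0$ gives $1-h>0$. Consequently $1/(1-h)$ is strictly positive and increasing on $[\alpha^*,\infty)$. Since a product of two nonnegative increasing functions is again increasing, it suffices to prove that $g-\phi$ is nonnegative and increasing on $[\alpha^*,\infty)$; then $f=\frac{1}{1-h}\cdot(g-\phi)$ inherits monotonicity, which is exactly \eqref{eq:M1}.

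For the nonnegativity I would recall that $\phi/g$ decreasing forces \eqref{eq:M} (Remark \ref{rem:ratio}), so $\{f\geq 0\}$ is an upper interval with left endpoint $\alpha^*$; hence $g-\phi\geq 0$ on $(\alpha^*,\infty)$, and also at $\alpha^*$ in case \eqref{eq:A1}. For the monotonicity I would split according to the disjunctive third bullet and factor $g-\phi$ so that both factors are nonnegative and increasing. If $g$ is increasing, write $g-\phi=g\cdot(1-\phi/g)$: here $g\geq 0$ is increasing by assumption, while $1-\phi/g$ is nonnegative (as $\phi\leq g$) and increasing because $\phi/g$ is decreasing. If instead $\phi$ is increasing, write $g-\phi=\phi\cdot(g/\phi-1)$: now $\phi\geq 0$ is increasing, and $g/\phi-1$ is nonnegative and increasing since $g/\phi=(\phi/g)^{-1}$ increases whenever $\phi/g$ decreases. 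In either case $g-\phi$ is a product of two nonnegative increasing functions, hence increasing, completing the reduction.

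The place that needs care, and the main obstacle, is the $\phi$-increasing branch, where the factorization $g-\phi=\phi\,(g/\phi-1)$ is only licit where $\phi>0$. Here I would argue that if $\phi(\alpha^*)=0$ then, since $\phi/g$ is nonnegative, decreasing, and vanishes at $\alpha^*$, it must vanish on all of $[\alpha^*,\infty)$, i.e.\ $\phi\equiv 0$; because $Y_{\tau_y}>y>b$ forces $g(Y_{\tau_y})>0$, this means $P_y(\tau_y<\infty)=0$, a degenerate situation that can be dispatched separately (or excluded). Otherwise $\phi(\alpha^*)>0$, and monotonicity of $\phi$ gives $\phi>0$ throughout $(\alpha^*,\infty)$, so the factorization applies. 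Finally, in case \eqref{eq:A2} the quantity $g-\phi$ at the endpoint $\alpha^*$ may be nonpositive; this is harmless, since there $f(\alpha^*)\leq 0\leq f(y)$ for every $y>\alpha^*$, so the required inequality $f(\alpha^*)\leq f(y)$ holds trivially and $f$ is increasing on all of $[\alpha^*,\infty)$.
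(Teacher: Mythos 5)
Your proof is correct and takes essentially the same route as the paper: the paper's own proof likewise uses the two factorizations $g-\phi=g(1-\phi/g)$ and $g-\phi=\phi(g/\phi-1)$ to show the numerator $g-\phi$ is nonnegative and increasing on $[\alpha^*,\infty)$, and then divides by the decreasing positive denominator $1-E_y(\rho^{\tau_y})$. Your extra care at the two edge points --- that the second factorization needs $\phi>0$ (with the degenerate case $P_y(\tau_y<\infty)=0$ isolated), and that in case \eqref{eq:A2} the endpoint value $f(\alpha^*)\leq 0$ must be compared separately --- is sound and in fact tightens details the paper's one-line argument passes over silently.
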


\begin{proof}
Writing
\[g-\phi=g(1-\phi/g)\mbox{ and }g-\phi=\phi(g/\phi-1)\]
and using that ${g(x)}\geq {\phi(x)}$ for $x\geq \alpha^*$, we obtain that $g-\phi$ is increasing on $[\alpha^*,\infty).$
	Hence, the numerator in
	\[f(x)=\frac{1}{1-E_x(\rho^{\tau_x})}(g(x)-\phi(x)),\]
	is increasing. As the denominator is decreasing by assumption, this proves the result.
\end{proof}

\section{The Random Walk case}\label{sec:RW}
The form of $\phi$ simplifies for the case of a random walk. Let $X_1,X_2,\dots$ be an i.i.d. sequence and $S_n=\sum_{i=1}^{n}X_i,\,S_0=0$, so that we work with
\[Y_n^y=y+S_n\]
as Markov process with starting point $y.$ Here, $P_y$ is the distribution of $Y^y$ with respect to an underlying probability measure $P$. To avoid trivial cases, we assume $P(X_1>0)>0.$ Then,
\[\tau_y=\inf\{n>0:y+S_n>y\}=\inf\{n>0:S_n>0\}=\tau_+,\]
say, and we obtain
\[\phi(y)=E_y\rho^{\tau_y}g(Y_{\tau_y})=E\rho^{\tau_+}g(y+S_{\tau_+}).\]
To apply Theorem \ref{thm:optimal}, we check the conditions of Lemma \ref{lem:suff_M_1}. The previous considerations (applied to $g\equiv 1$) yield
\[E_y\rho^{\tau_y}=E\rho^{\tau_+},\]
being independent of $y$. Furthermore,
\[\frac{\phi(y)}{g(y)}=E\rho^{\tau_+}\frac{g(y+S_{\tau_+})}{g(y)}.\]
This shows that $\phi/g$ is decreasing on $[\alpha^*,\infty)$ if
\begin{equation*}
\frac{g(y+s)}{g(y)}\mbox{ is decreasing in $y$ on $[\alpha^*,\infty)$},
\end{equation*}
i.e. for all $s\geq 0$
\begin{equation*}
{\log g(y+s)}-\log {g(y)}\mbox{ is decreasing in $y$ on $[\alpha^*,\infty)$},
\end{equation*}
which is, $\log g$ is concave on $[\alpha^*,\infty)$, i.e. $g$ is $\log$-concave on $[\alpha^*,\infty)$.
%
Therefore, Lemma \ref{lem:suff_M_1} and Theorem \ref{thm:optimal} yield

\begin{thm}\label{prop:RW_optimal}
	If $Y$ is a random walk, then
	\[\alpha^*=\inf\left\{z>b:E\rho^{\tau_+}\frac{g(z+S_{\tau_+})}{g(z)}\leq 1\right\}.\]
	 Assume now that $g$ is increasing and log-concave on $\{y:y>\alpha^*\}$ and \eqref{eq:Opt} holds true.
	\begin{enumerate}[(i)]
		\item Assume \eqref{eq:A1}. Then
		\[\tau^*=\inf\{n\geq 0:Y_n\geq \alpha^*\}\]
		is an optimal stopping time for problem \eqref{eq:OSP}, i.e. $a^*=\alpha^*$.
		\item Assume \eqref{eq:A2}. Then
		\[\tau^*=\inf\{n\geq 0:Y_n>\alpha^*\}\]
		is an optimal stopping time for problem \eqref{eq:OSP}.
	\end{enumerate}
\end{thm}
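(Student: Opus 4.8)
The plan is to reduce the statement entirely to the two results that precede it: verify the three hypotheses of Lemma \ref{lem:suff_M_1} on $[\alpha^*,\infty)$, so that \eqref{eq:M1} holds, and then read off the two cases directly from Theorem \ref{thm:optimal}. No new machinery beyond the random-walk simplifications of $\phi$ and $E_y(\rho^{\tau_y})$ is needed.

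First I would record the consequences of the random-walk structure. Since $\tau_y=\tau_+=\inf\{n>0:S_n>0\}$ does not depend on $y$, we have $E_y(\rho^{\tau_y})=E\rho^{\tau_+}$, constant in $y$, and $\phi(y)=E\rho^{\tau_+}g(y+S_{\tau_+})$. Rewriting the defining inequality $\phi(z)\le g(z)$ as $\phi(z)/g(z)\le 1$ — legitimate because $g>0$ on $\{y>b\}$ — immediately yields the asserted formula $\alpha^*=\inf\{z>b:E\rho^{\tau_+}g(z+S_{\tau_+})/g(z)\le 1\}$.

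Second, I would check the three bullets of Lemma \ref{lem:suff_M_1}. The first holds trivially, since $y\mapsto E_y(\rho^{\tau_y})=E\rho^{\tau_+}$ is constant, hence increasing; the third holds because $g$ is assumed increasing. The substantive point is that $\phi/g$ is decreasing on $[\alpha^*,\infty)$. Here I use $\phi(y)/g(y)=E\rho^{\tau_+}\,g(y+S_{\tau_+})/g(y)$ together with the observation that log-concavity of $g$ makes $y\mapsto g(y+s)/g(y)$ nonincreasing for every fixed $s>0$, since $\log g(y+s)-\log g(y)=\int_y^{y+s}(\log g)'(u)\,du$ and $(\log g)'$ is nonincreasing. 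Because $S_{\tau_+}>0$ on $\{\tau_+<\infty\}$ and $y\ge\alpha^*$ forces $y+S_{\tau_+}>\alpha^*$, every integrand is a nonincreasing function of $y$, and as $\rho^{\tau_+}\ge 0$ is a common factor, monotonicity of expectation passes through; hence $\phi/g$ is decreasing. The only care needed is the boundary point $\alpha^*$, where log-concavity is assumed only on the open set $\{y>\alpha^*\}$; I would handle this by a one-sided limit $y\downarrow\alpha^*$. With all three bullets established, Lemma \ref{lem:suff_M_1} delivers \eqref{eq:M1}, and Theorem \ref{thm:optimal} then gives optimality of $\inf\{n\ge 0:Y_n\ge\alpha^*\}$ with $a^*=\alpha^*$ under \eqref{eq:A1} (part (i)) and of $\inf\{n\ge 0:Y_n>\alpha^*\}$ under \eqref{eq:A2} (part (ii)).

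The main obstacle I anticipate is not this assembly but the well-definedness of $f$ when $\rho=1$: Theorem \ref{thm:optimal} requires $P_y(\tau_y=\infty)>0$, i.e. $P(\tau_+=\infty)>0$, which for a random walk amounts to $S_n\to-\infty$. I would close this genuinely random-walk-specific gap by noting that for increasing $g$ it is forced by \eqref{eq:Opt}: with $\rho=1$ the requirement $g(y+S_n)\to 0$, for an increasing $g$ that is positive beyond $b$, entails $S_n\to-\infty$, whence $P(\tau_+=\infty)>0$.
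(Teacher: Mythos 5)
Your proposal is correct and follows essentially the same route as the paper: the random-walk structure makes $E_y(\rho^{\tau_y})=E\rho^{\tau_+}$ constant and $\phi(y)/g(y)=E\rho^{\tau_+}g(y+S_{\tau_+})/g(y)$, log-concavity of $g$ makes $y\mapsto g(y+s)/g(y)$ nonincreasing so that $\phi/g$ is decreasing, and Lemma \ref{lem:suff_M_1} plus Theorem \ref{thm:optimal} then deliver both cases exactly as in the paper. Your two extra touches --- the one-sided limit at the boundary point $\alpha^*$ and the observation that for $\rho=1$ condition \eqref{eq:Opt} together with monotonicity of $g$ forces $S_n\to-\infty$ and hence $P(\tau_+=\infty)>0$ --- address hypotheses the paper leaves implicit, and both arguments are sound.
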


\begin{remark}
\cite{hsiau2014logconcave} considered optimal stopping problems for skip-free random walks on $\Z$ (and, in continuous time, L\'evy processes with no upward jumps). The main finding of their paper was a variant of the previous proposition in this special case. Their method of proof relies heavily on the fact that no overshoot occurs in the skip-free case and is therefore structurally different from ours.\\
They even proved that these assumptions on $g$ are necessary in the following sense: If the optimal stopping time for $g:\Z \rightarrow [0, \infty)$ is of threshold type for all Bernoulli random walks, then $g$ is increasing and log-concave. This, however, shows that the previous theorem gives a characterization of all reward functions that lead to one-sided stopping situations for all random walks (and all discount factors).
\end{remark}


\section{Examples}\label{sec:ex}
We firstly consider two prominent and one new example for underlying random walks. After this,  starting from i.i.d. sequences, we illustrate our results for more general Markovian sequences.
\subsection{The Novikov-Shiryaev problem}
We consider here and in the following two examples in the setting of Section \ref{sec:RW}. For $\nu>0$, we assume finiteness of $E|X_i|^\nu$ and set 
\[g(y)=(y^+)^\nu,\]
so that 
\[V(y)=\sup_{\tau}E\rho^\tau ((y+S_\tau)^+)^\nu,\;\;\rho\in(0,1].\]
The condition \eqref{eq:Opt} for optimality of $\tau^*$ holds if
\[\rho=1,\,E(X_1)<0,\, E(X_1^+)^{\nu+1}<\infty\mbox{ or }\rho<1,\, E(X_1^+)^{\nu}<\infty, \]
see \cite{NS2}, Lemma 3. It is immediately seen that $g$ is log-concave on $[0,\infty)$, so that Theorem \ref{prop:RW_optimal} yields the optimality of $\inf\{n\geq 0:X_n\geq a^*\}$, where $a^*$ is uniquely given by
\[a^*=a^*(\nu)=\inf\left\{y>0:E\rho^{\tau_+}\left(1+\frac{S_{\tau_+}}{y}\right)^\nu\leq 1\right\}.\]
In the non-trivial situation $P(X_i>0)>0$, we have that $a^*>0$ is the positive solution to
\[p_\nu\left(\frac{1}{y}\right)=E\rho^{\tau_+}\left(1+\frac{S_{\tau_+}}{y}\right)^\nu=1.\]
In the case $\nu$ is an integer, it is clear that $p_\nu$ is a polynomial of degree $\nu$ and the coefficients are given explicitly in terms of moments: $\binom{\nu}{m} E\rho^{\tau_+}{S_{\tau_+}^m},\,m\leq \nu.$

In \cite{NS} and \cite{NS2}, the optimal level was obtained using Appell polynomials by far more sophisticated techniques. The Appell polynomials $Q_\nu$ can be  found recursively via $Q_0(x)=1$ and
\[Q_{\nu}'(x)=\nu Q_{\nu-1}(x),\;EQ_{\nu}(M_T)=0,\;\nu=1,2,\dots,\]
where, as above, $M_T$ denotes the running maximum process evaluated at an independent geometric time $T$ with parameter $1-\rho$. The optimal threshold is then found to be the root of $Q_\nu$. Using the results discussed in Subsection \ref{subsec:other_approaches} below, it follows from the discussion in \cite{CST}, Subsection 3.3, that $Q_\nu$ coincides with our function $f$ from \eqref{eq:f_def}. As the roots of $f(y)=\frac{1}{1-E_y(\rho^{\tau_y})}(g(y)-\phi(y))$ and solutions to $\phi(y)/g(y)=E\rho^{\tau_+}\left(1+\frac{S_{\tau_+}}{y}\right)^\nu=1$ coincide, both results are indeed equivalent. 

	\subsection{American Put}
	For $K>0,\,\rho\in(0,1)$ set
	\[g(y)=(K-\exp(y))^+\]
	and consider the associated problem with value function
	\[V(y)=\sup_{\tau}E\rho^\tau(K-\exp(Y^y_\tau))^+. \]
	Note that we could equivalently use a multiplicative random walk on $(0,\infty)$ with reward $y\mapsto(K-y)^+$.
	Condition \eqref{eq:Opt} is obviously fulfilled and the reward $g$ is decreasing and the second logarithmic derivative is 
	\[-K\exp(y)(K-\exp(y))^{-2}<0,\]
	so that $g$\ is log-concave on $(-\infty,\log(K)]$. We can therefore apply Theorem \ref{prop:RW_optimal} in the reversed situation described in\ Remark \ref{rem:reversed}. This yields optimality of 
	\[\tau^*=\inf\{n\geq 0:Y_n\leq  a^*\}\]
	with $a^*$ given by 
\[a^*=\inf\left\{y>0:E\rho^{\tau_-}\left(\frac{K-\exp(y+S_{\tau_-})}{K-\exp(y)}\right)\leq 1\right\}.\]	
Here, in the spirit of Remark \ref{rem:reversed}, 
\[\tau_-=\inf\{m>0:S_m<0\}.\]
More explicitly, we have
\[a^*=\log\left(K\frac{1-E(\rho^{\tau_-})}{1-E(\rho^{\tau_-}e^{S_{\tau_-}})}\right).\]
The Pecherskii-Rogozin identity for random walks, see \cite{Kyp_Wiener_Hopf}, easily yields the alternative representation
\[a^*=\log\left(KE\exp(I_T)\right),\]
where $I$ denotes the running minimum process and $T$ is a time as before, reproducing the result in \cite{Mo}. We refer to \cite{CIN} and \cite{baurdoux2013direct} for showing elementary that threshold times are optimal. 

 \subsection{Logistic reward}\label{subsec:logistic}
	We want to underline that the theory developed here is applicable directly also for new classes of rewards for an underlying random walk. Indeed, consider a logistic reward function of the form
		\[g(y)=\frac{1}{K\exp(-\eta y)+1}\]
		for some $K,\eta>0.$ In the particular case of a spectrally negative L\'evy process, this problem was studied in \cite{RePEc:tkk:dpaper:dp9}. Similar to the previous examples, Theorem \ref{prop:RW_optimal} yields that a threshold time is optimal and the optimal threshold is given as the unique solution $a^*$ to
		\[E\rho^{\tau_+}\frac{Ke^{-\eta y}+1}{Ke^{-\eta(y+S_{\tau_+})}+1}=1.\]
		A more explicit expression in the particular case of exponential upward jumps is presented at the end of Subsection \ref{subsec:exponential_innov} below.

\subsection{Shepp-Shiryaev Russian option problem}\label{subsec:Shepp_shi}
Let $X_1,X_2,\dots$ be an i.i.d. sequence and $S_n=\sum_{i=1}^nX_i,\;S_0=0.$ We consider the optimal stopping problem for
\[\rho^n\exp(\max\{y,S_0,S_1,\dots,S_n\}),\;n=0,1,2,\dots,\,\rho\in(0,1),\;y\geq 0, \]
so that
\[V(y)=\sup_{\tau} E\rho^\tau\exp(\max\{y,S_0,S_1,\dots,S_\tau\}). \]
The continuous time problem was introduced by \cite{SS} and \cite{SheppShir94}. The general discrete time setting for this problem as given in this paper seems to be new. 

We use the change of measure approach of \cite{SheppShir94} as follows. Write $\mu=Ee^{X_1}$ and
\begin{align*}
&\rho^n\exp(\max\{y,S_0,S_1,\dots,S_n\}-S_n)\exp(S_n)\\
=&(\rho\mu)^n\exp(\max\{y,S_0,S_1,\dots,S_n\}-S_n)\prod_{i=1}^n\frac{\exp(X_i)}{\mu}. 
\end{align*}
Now, we use the probability measure 
\[\frac{dQ}{dP}\bigg|_{\A_n}=\prod_{i=1}^n\frac{\exp(X_i)}{\mu},n=1,2,\dots .\]
With respect to $Q$, the random variables $X_1,X_2,\dots$ are again i.i.d. with\ $E_QX_i=\frac{1}{\mu}EX_i\exp(X_i).$ 
Then, we arrive at an optimal stopping problem w.r.t. $Q$\ and have 
\[V(y)=\sup_{\tau} E_Q(\rho\mu)^\tau\exp(\max\{y,S_0,S_1,\dots,S_\tau\}-S_\tau). \]
Using the notation $r=\rho\mu,\;Y_n^y=\max\{y,S_0,S_1,\dots,S_n\}-S_n,$ we have
\[V(y)=\sup_{\tau} E_Qr^\tau\exp(Y^y_\tau). \]
Here
\[Y_0^y=y\geq 0,\,Y^y_{n+1}=(Y_n^y-X_{n+1})^+,\]
so we have a Markov process with starting point\ $y\in[0,\infty)$ and random variable representation $Y^y_{n+1}=(Y_n^y-X_{n+1})^+$ with respect to $Q$.
Although variants of this problem have been studied extensively, the question whether condition \eqref{eq:Opt} is fulfilled or not does not seem to have been addressed. Because it seems to be of some more general interest, we discuss this in Appendix \ref{sec:appendix_opt_ss_problem}. It turns out that \eqref{eq:Opt} holds under the natural condition
\[r=\rho\mu<1,\] which is assumed in the following.

The first step for this problem is elementary, see \cite{baurdoux2013direct},\ Section 4, and our Appendix \ref{sec:appendix_illust}, or also \cite{AAP}:\ There exists $a^*\geq 0$ such that 
\[S^*=[a^*,\infty)\]
and the first entrance time into $S^*$ is optimal. 
Note that\ $g=\exp>0$ on the whole state space $[0,\infty).$ If $a^*=0$, then immediate stopping is optimal, so let us assume $a^*>0$ in the following. For a starting point\ $y$
\[\tau_y=\inf\{m>0:Y_m^y>y\} \]
and we obtain
\begin{align*}
\phi(y)&=E_Qr^{\tau_y}\exp(Y^y_{\tau_y}),\\
\frac{\phi(y)}{g(y)}&=E_Qr^{\tau_y}\exp(Y^y_{\tau_y}-y).
\end{align*}

We now try to establish property \eqref{eq:M} by making use of Remark \ref{rem:ratio}. First, let\ $y'>y\geq 0$ and consider corresponding random paths starting in $y',\,y.$ Then, the $y'$-path is always above the $y$-path and the differences at any time point between these paths is $\leq y'-y$ (with equality as long as the $0$-boundary is not reached by the $y$-path). So, if the $y'$-path is $>y'$, then the $y$-path is $>y$, which shows
\begin{equation}\label{eq:SS_coupling}
\tau_y\leq \tau_{y'}.
\end{equation}
In particular
\[E_Qr^{\tau_{y'}}\leq E_Qr^{\tau_{y}}.\]

We first only look at the special case that $X_m$ takes values in $\Z$, with state space of $Y_m$ being $\N\cup\{0\},$ and that the positive jumps of\ $Y_m$ can only take the value $+1$, this being equivalent to $X_i=-1$ on $\{X_i<0\}$. For $y\in \N$, we have 
\[Y_{\tau_y}-y=1\mbox{ on }\{\tau_y<\infty\},\]
hence
\[\frac{\phi(y)}{g(y)}=E_Q(r^{\tau_y}\exp(1))\]
is decreasing in $y$ due to \eqref{eq:SS_coupling}. Therefore, we have the monotone case with optimal
\[a^*=\inf\{y:E_Q(r^{\tau_y})\leq \exp(-1)\}.\]
The result can be seen as a generalization of the result in \cite{MR1348194} with a much simplified derivation. 
The following proposition shows that the monotone case holds in more general situations as well.

\begin{proposition}
	Assume that the positive jumps of $Y_m$ are bounded by some constant $B>0,$ i.e. $X_m^-\leq B$. Then, for $r<\exp(-B)$, the monotone case holds and 
	\[a^*=\inf\{y>0:E_Qr^{\tau_{y}}\exp(Y^{y}_{\tau_{y}}-y)\leq 1\}.\]
\end{proposition}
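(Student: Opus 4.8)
The plan is to establish condition \eqref{eq:M} via Remark \ref{rem:ratio}, i.e. to show that the ratio
\[h(y):=\frac{\phi(y)}{g(y)}=E_Q r^{\tau_y}\exp(Y^y_{\tau_y}-y)\]
is decreasing in $y$ on $(0,\infty)$. Once this is done, Remark \ref{rem:ratio} yields \eqref{eq:M}, the stopping problem is of monotone case, and Theorem \ref{thm:opt_threshold} identifies the optimal level as $a^*=\inf\{y>0:\phi(y)\le g(y)\}$. Since $g=\exp>0$, the inequality $\phi(y)\le g(y)$ is equivalent to $h(y)\le 1$, which is exactly the asserted formula $a^*=\inf\{y>0:E_Q r^{\tau_y}\exp(Y^y_{\tau_y}-y)\le 1\}$.

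To prove that $h$ is decreasing, I would fix $y'>y>0$ and run the two reflected paths $Y^y$ and $Y^{y'}$ from the same increments. A one-line induction (using that $x\mapsto x^+$ is nondecreasing and $1$-Lipschitz) gives $0\le Y^{y'}_n-Y^y_n\le y'-y$ for all $n$; this is the coupling already recorded in \eqref{eq:SS_coupling} and yields $\tau_y\le\tau_{y'}$. The core of the argument is then to compare the two integrands by conditioning on $\A_{\tau_y}$. On $\{\tau_y<\infty\}$ write $z'=Y^{y'}_{\tau_y}$, so that $z'=Y^y_{\tau_y}+D$ with $0\le D\le y'-y$ and $Y^y_{\tau_y}>y$. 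If $z'>y'$, then $\tau_{y'}=\tau_y$ and $Y^{y'}_{\tau_{y'}}=z'$, whence
\[Y^{y'}_{\tau_{y'}}-y'=(Y^y_{\tau_y}-y)+(D-(y'-y))\le Y^y_{\tau_y}-y,\]
so the $y'$-integrand is dominated by the $y$-integrand pathwise.

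If instead $z'\le y'$, then starting from a value at most $y'$ the first passage strictly above $y'$ requires at least one further step and produces an overshoot of at most $B$, because the upward jumps of $Y$ are bounded by $B$. Hence, by the strong Markov property at $\tau_y$,
\[E_Q\bigl[r^{\tau_{y'}}\exp(Y^{y'}_{\tau_{y'}}-y')\mid\A_{\tau_y}\bigr]\le r^{\tau_y}\cdot r\exp(B)\le r^{\tau_y},\]
where the last step uses the standing assumption $r<\exp(-B)$, i.e. $r\exp(B)<1$. Since the overshoot $Y^y_{\tau_y}-y$ is positive, the $y$-integrand satisfies $r^{\tau_y}\exp(Y^y_{\tau_y}-y)\ge r^{\tau_y}$, so the conditional expectation of the $y'$-integrand is again dominated. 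On $\{\tau_y=\infty\}$ both integrands vanish. Taking expectations through the tower property gives $h(y')\le h(y)$.

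The main obstacle is precisely this second case together with the correct use of the discount bound: although passing from $y$ to $y'$ can enlarge the overshoot by up to $y'-y$, the only regime in which the overshoot factor could outweigh that of the $y$-path is when the $y'$-path must still travel at least one more step, and there the extra discount factor $r$ combined with the jump bound $\exp(B)$ overcompensates exactly because $r\exp(B)<1$. The remaining points — writing the restart term $E_{z'}[\,\cdot\,]$ as an $\A_{\tau_y}$-measurable function of $z'$ via the strong Markov property, and the bookkeeping on $\{\tau_y=\infty\}$ — are routine.
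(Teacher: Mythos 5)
Your proof is correct and follows essentially the same route as the paper's: the coupling bound \eqref{eq:SS_coupling} giving $\tau_y\le\tau_{y'}$, the case split $\{\tau_{y'}=\tau_y\}$ versus $\{\tau_{y'}>\tau_y\}$, the pathwise overshoot comparison $Y^{y'}_{\tau_{y'}}-y'\le Y^{y}_{\tau_{y}}-y$ in the first case, and the jump bound $Y^{y'}_{\tau_{y'}}-y'\le B$ combined with the extra discount factor and $r e^{B}<1$ in the second. The only cosmetic difference is that you package the second case as a conditional expectation via the strong Markov property, whereas the paper notes that the inequality $r^{\tau_{y'}}\exp(Y^{y'}_{\tau_{y'}}-y')\le r^{\tau_y}\,r\,e^{B}$ already holds pathwise on $\{\tau_{y'}>\tau_y\}$, so the conditioning step is superfluous.
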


\begin{proof}
Consider $y'>y\geq 0$ and argue with the paths as above. We have
	\begin{align*}
	\frac{\phi(y')}{g(y')}&=E_Qr^{\tau_{y'}}\exp(Y^{y'}_{\tau_{y'}}-y')\\
	&\leq E_Q1_{\{\tau_{y'}=\tau_y\}}r^{\tau_{y}}\exp(Y^{y'}_{\tau_{y'}}-y')+E_Q1_{\{\tau_{y'}>\tau_y\}}r^{\tau_{y}+1}\exp(Y^{y'}_{\tau_{y'}}-y')\\
	&\leq E_Q1_{\{\tau_{y'}=\tau_y\}}r^{\tau_{y}}\exp(Y^{y}_{\tau_{y}}-y)+E_Q1_{\{\tau_{y'}>\tau_y\}}r^{\tau_{y}}re^B\exp(Y^{y}_{\tau_{y}}-y)\\
	&\leq E_Qr^{\tau_{y}}\exp(Y^{y}_{\tau_{y}}-y)=\frac{\phi(y)}{g(y)},
	\end{align*}
	where we have used \eqref{eq:SS_coupling}, $Y^{y'}_{\tau_{y'}}-y'\leq Y^{y}_{\tau_{y}}-y$, and $Y^{y'}_{\tau_{y'}}-y'\leq B$. 
\end{proof}

\subsection{Autoregressive processes with exponential upward innovations}\label{subsec:exponential_innov}
In addition to the i.i.d. sequence $X_1,X_2,\dots$ (called innovations), we furthermore fix a constant $0<\lambda<1$ and consider an autoregressive process of order 1 (AR(1)-process) $(Y_n)_{n\in\N_0}$ by 
\[Y_n=\lambda Y_{n-1}+X_n\mbox{ for all $n\in\N$}.\]
The random walk case $\lambda=1$ is excluded to avoid the distinction of different cases in the following. The Markovian structure can be identified from the decomposition
\[Y_n=\lambda^nY_0+\sum_{k=0}^{n-1}\lambda^kX_{n-k}\]
and we write $P_y$ for the distribution of $Y$ started in $Y_0=y$. As mentioned before, optimal stopping problems for this class of processes were studied in \cite{CIN}, see also \cite{NK} and \cite{F2}. In particular, it can be shown elementary that the following reward functions lead to optimal one-sided stopping times (under suitable integrability assumptions):
\[g(x)=(x-K)^+,\,K>0,\;g(x)=\left(x^+\right)^n,\,n\in\N.\]
Further examples can be identified for certain subclasses of innovation-distributions, in particular for processes with positive innovations. 
To invoke Theorem \ref{thm:opt_threshold} for finding the optimal threshold, we have to investigate the validity of condition \eqref{eq:M} in the autoregressive situation. This is a nontrivial problem since $\phi$ depends on the joint distribution of the ladder heights and -variable, and
we have to compute $\phi(y)=E_y\rho^{\tau_y}g(Y_{\tau_y})$ as explicitly as possible. A class of AR(1)-processes where this is possible is given for innovations of the form
\begin{equation}\label{eq:phase_jumps}
X_i=X^+_i-X^-_i,
\end{equation}
where $X^\pm_i$ are independent, non-negative and $X^+_i$ has a distribution of phase-type, see \cite{AAP} for a corresponding result for L\'evy processes and \cite{christensen2012phase} for general AR(1)-processes. It is worth noting that this class of innovations
 is dense in the class of all distributions. To not overburden this paper, we just illustrate the approach in the easier case that $X_i^+$ is exponentially distributed with parameter $\mu$. As a special case of \cite[Corollary 1]{christensen2012phase}, it then holds that $\tau_y$ and the overshoot $Y_{\tau_y}-y$ are independent and $Y_{\tau_y}-y$ is exponentially distributed with the same parameter $\mu$. This yields
\begin{align}\label{eq:exp_innov_decomp}
\phi(y)=E_y\rho^{\tau_y}\cdot Eg(y+X_1^+)=E_y\rho^{\tau_y}\cdot \int_0^\infty g(y+x)\mu e^{-\mu x}dx.
\end{align}
From the pathwise representation of autoregressive processes it is clear that for $0<y\leq y'$ we have $\tau_y\leq \tau_{y'}$ and $E_y\rho^{\tau_y}\geq E_{y'}\rho^{\tau_{y'}}$. This shows that for $g$ as above, condition \eqref{eq:M} holds if 
\[y\mapsto Eg(y+X_1^+)\frac{1}{g(y)}\mbox{ is decreasing in }y>0,\]
and then the optimal threshold is given by
\[a^*=\inf\{y:E_y\rho^{\tau_y}\geq g(y)Eg(y+X_1^+)^{-1}\}\]
To obtain a more explicit expression for $a^*$, we have to determine $E_y\rho^{\tau_y}$. 
 To obtain this, \cite[Theorem 3.2]{christensen2012phase} and \cite[Theorem 3.3]{CIN} can be applied. Using the notations $\exp(\psi),\,\exp(\psi_2)$ for the Laplace-transform of $X_1,$ $X_1^-$, resp., and 
\begin{equation*}
\eta(u)=\sum_{k=0}^\infty\psi(\lambda^k u),
\end{equation*}
we obtain the following explicit result:
\begin{proposition}
	Assume that we have an AR(1)-process with innovations $X_i$ of the form \eqref{eq:phase_jumps} with $Exp(\mu)$-distributed $X_i^+.$ Then,
	\[\phi(y)=\frac{\sum_{n\in\N}e^{\lambda^n\mu y-\eta(\lambda^n \mu)}\rho^n} {\sum_{n\in\N_0}e^{\lambda^n\mu y-\eta(\lambda^{n+1} \mu)-\psi_2(\lambda^n \mu)}\rho^n} \cdot\int_0^\infty g(y+x)\mu e^{-\mu x}dx.\]
	If $X_i^-=0$ a.s., this simplifies to
	\[\phi(y)=\frac{e^{-\mu (1 - \lambda) y}}{\gamma +
		e^{-\mu (1 - \lambda) y}}\cdot \int_0^\infty g(y+x)\mu e^{-\mu x}dx,\;\;\gamma = \frac{1}{\rho} -
	1.\]
\end{proposition}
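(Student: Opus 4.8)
The plan is to reduce the whole statement to the single quantity $E_y\rho^{\tau_y}$ and then to evaluate it. The reduction is already supplied by \eqref{eq:exp_innov_decomp}: because $X_i^+$ is $Exp(\mu)$-distributed and therefore memoryless, the overshoot $Y_{\tau_y}-y$ is again $Exp(\mu)$-distributed and independent of $\tau_y$ (this is \cite[Corollary 1]{christensen2012phase}, invoked above), whence $\phi(y)=E_y\rho^{\tau_y}\cdot\int_0^\infty g(y+x)\mu e^{-\mu x}\,dx$. Thus the entire proposition is the evaluation of the discounted first-passage transform $E_y\rho^{\tau_y}$; the two displayed formulas differ only in the expression found for this factor, the second being the specialization $X_i^-=0$ of the first.

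For the general two-sided case I would exploit the way the one-step operator acts on the exponential family $y\mapsto e^{\lambda^n\mu y}$. From $Y_1=\lambda y+X_1$ one gets $E_y e^{\lambda^n\mu Y_1}=e^{\lambda^{n+1}\mu y}\,E e^{\lambda^n\mu X_1}=e^{\lambda^{n+1}\mu y}e^{\psi(\lambda^n\mu)}$, i.e. the operator acts as a weighted shift $n\mapsto n+1$ on these functions. This is why $E_y\rho^{\tau_y}$ comes out as an infinite series indexed by $n$ with exponents $\lambda^n\mu y$, and why the stationary cumulant transform $\eta(u)=\sum_{k\ge0}\psi(\lambda^k u)$ — for which $E e^{uY_\infty}=e^{\eta(u)}$ and $\eta(\lambda^n\mu)-\eta(\lambda^{n+1}\mu)=\psi(\lambda^n\mu)$ — organizes the coefficients, with $\psi_2$ entering through the negative part of the innovations. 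Concretely I would appeal to \cite[Theorem 3.2]{christensen2012phase} and \cite[Theorem 3.3]{CIN}, which give the joint transform of $(\tau_y,Y_{\tau_y}-y)$ for phase-type upward jumps; reading off $E_y\rho^{\tau_y}$ and using the telescoping identity to rewrite $e^{-\eta(\lambda^{n+1}\mu)}=e^{\psi(\lambda^n\mu)}e^{-\eta(\lambda^n\mu)}$ produces the quotient $N(y)/D(y)$ displayed in the proposition.

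For the simplification $X_i^-=0$ I would instead argue directly and self-containedly. A first-step decomposition of $u(z):=E_z\rho^{\inf\{n\ge1:Y_n>y\}}$ over $\{X_1>y-\lambda z\}$ and its complement gives, for $z\le y$,
\[
u(z)=\rho e^{-\mu(y-\lambda z)}+\rho\mu e^{\mu\lambda z}\int_{\lambda z}^{y}u(w)e^{-\mu w}\,dw .
\]
Since the integral operator sends $e^{\mu\lambda^n w}$ to a combination of $e^{\mu\lambda^{n+1}z}$ and $e^{\mu\lambda z}$, the solution must have the form $u(z)=\sum_{n\ge1}a_n e^{\mu\lambda^n z}$; matching coefficients yields the recursion $a_{n+1}=\rho a_n/(1-\lambda^n)$ together with a single linear relation fixing $a_1$, so that $E_y\rho^{\tau_y}=u(y)$ is an explicit $q$-series with $q=\lambda$ (equivalently, this is $N(y)/D(y)$ with $\psi_2\equiv0$ and $e^{-\eta(\lambda^n\mu)}=\prod_{j\ge n}(1-\lambda^j)$). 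The step I expect to be the real obstacle is twofold: the general first-passage transform behind the cited theorems already needs genuine fluctuation theory, since the AR(1) recursion is spatially inhomogeneous and has no single exponential harmonic function; and the collapse of the resulting $q$-series to the claimed elementary form $e^{-\mu(1-\lambda)y}/(\gamma+e^{-\mu(1-\lambda)y})$, $\gamma=\tfrac{1}{\rho}-1$, is a delicate basic-hypergeometric summation that I would check carefully — e.g. against the limit $\lambda\downarrow0$, where $\tau_y$ degenerates to a geometric hitting time and the value must reduce to $\rho e^{-\mu y}/(1-\rho(1-e^{-\mu y}))$.
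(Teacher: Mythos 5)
Your reduction and your treatment of the general two-sided formula coincide with the paper's own route: the paper gives no written proof of this proposition, but derives it exactly as you do, by combining the overshoot factorization \eqref{eq:exp_innov_decomp} (independence of $\tau_y$ and the $Exp(\mu)$-overshoot) with the first-passage transforms cited from \cite[Theorem 3.2]{christensen2012phase} and \cite[Theorem 3.3]{CIN}; your structural remarks (the one-step operator acting as a weighted shift on $y\mapsto e^{\lambda^n\mu y}$, the telescoping $\eta(\lambda^n\mu)-\eta(\lambda^{n+1}\mu)=\psi(\lambda^n\mu)$, and the reading $e^{-\eta(\lambda^n\mu)}=\prod_{j\geq n}(1-\lambda^j)$ with $\exp(\psi(u))=Ee^{uX_1}$) are all consistent with that. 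Your integral equation for $u(z)$ in the case $X_i^-=0$ is also correct, and so is the coefficient matching: applying the operator to $e^{\mu\lambda^n w}$ gives $\frac{\rho}{1-\lambda^n}\bigl(e^{\mu\lambda^{n+1}z}-e^{-\mu(1-\lambda^n)y}e^{\mu\lambda z}\bigr)$, whence $a_{n+1}=\rho a_n/(1-\lambda^n)$ together with the single relation $a_1=\rho e^{-\mu y}-\sum_{n\geq1}a_{n+1}e^{-\mu(1-\lambda^n)y}$.

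The genuine gap is precisely the step you flagged: the ``delicate basic-hypergeometric summation'' collapsing the $q$-series to $e^{-\mu(1-\lambda)y}/(\gamma+e^{-\mu(1-\lambda)y})$ does not exist, and your proposed sanity checks would not reveal this, because the closed form is exact at both endpoints ($\lambda\downarrow0$ gives the geometric value $\rho e^{-\mu y}/(1-\rho(1-e^{-\mu y}))$, and $\lambda\uparrow1$ gives $\rho$) but not at interior $\lambda$. Test $\mu=y=1$, $\lambda=\rho=1/2$: solving your recursion numerically gives $u(y)=E_y\rho^{\tau_y}\approx0.3711$, which agrees with the proposition's \emph{general} display specialized to $\psi_2\equiv0$ (numerator $\approx0.6523$, denominator $\approx1.7580$), whereas the claimed simplification gives $e^{-1/2}/(1+e^{-1/2})\approx0.3775$. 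An ansatz-free confirmation that the smaller value is the correct one: with $P(\tau_y=1)=e^{-1/2}\approx0.6065$, $P(\tau_y=2)=2e^{-3/4}(1-e^{-1/4})\approx0.2090$, and $P(\tau_y=3)\approx0.0941$ (an elementary double integral), one gets the crude bound $E_y\rho^{\tau_y}\leq\tfrac12P_1+\tfrac14P_2+\tfrac18P_3+\tfrac1{16}(1-P_1-P_2-P_3)\approx0.3729<0.3775$, so the closed form cannot equal $E_y\rho^{\tau_y}$. In other words, the two displays of the proposition are mutually inconsistent at interior $\lambda$, the first display (and your $q$-series, which is its $\psi_2\equiv0$ specialization) being the correct one; so your plan can at best terminate at the $q$-series, and the stated elementary form -- together with the Lambert-$W$ threshold derived from it in the paper -- should be re-examined against what \cite[Theorem 3.3]{CIN} actually asserts rather than ``checked carefully'' by endpoint limits.
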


The previous proposition indeed allows for explicit solutions to optimal stopping problems of interest. Let us, e.g., consider the case $g(x)=x^+$ with exponentially distributed innovations. Then, for $y\geq 0$,
\[\int_0^\infty g(y+x)\mu e^{-\mu x}dx=y+\frac{1}{\mu}.\]
Therefore, the optimal threshold $a^*$ is given as the unique positive solution to
\[y=\frac{e^{-\mu (1 - \lambda) y}}{\gamma +
	e^{-\mu (1 - \lambda) y}}\left(y+\frac{1}{\mu}\right),\]
i.e. 
\[a^*=\frac{LambertW((1-\lambda)/\gamma)}{(1-\lambda)\mu},\]
where $LambertW(\cdot)$ denotes the Lambert-W function.

\begin{remark}
	The corresponding result for random walks with jumps of the type \eqref{eq:phase_jumps} with exponentially distributed $X_1^+$ can be obtained in a similar manner. In this case, $\phi(y)=E\rho^{\tau_+}=c$, say, is a constant which can now be found by applying \eqref{eq:exp_innov_decomp} to the martingale $\rho^nb^{S_n},\,n\in\N_0$, where $b$ is such that $Eb^{X_1}=1/\rho.$ Then, using optional sampling,
	\[1=E\rho^{\tau_+}b^{S_{\tau_+}}=c\int_0^\infty b^x\mu e^{-\mu x}dx,\]
	i.e. 
	\[c=\frac{\mu-\log(b)}{\mu}.\]
This yields
		\[\phi(y)=\frac{\mu-\log(b)}{\mu} \int_0^\infty g(y+x)\mu e^{-\mu x}dx,\]
		allowing for more direct computations in the random walk examples at the beginning of this section. For example, in the situation of Subsection \ref{subsec:logistic} above for $\eta=\mu$ (for simplicity of expressions), we obtain
		\begin{align*}\phi(y)&=\frac{\mu-\log(b)}{\mu}\int_0^\infty \frac{1}{K\exp(\mu(x+y))+1}\mu e^{-\mu x}dx\\&=\frac{\mu-\log(b)}{\mu K}\exp(\mu y)\log\left(\frac{\exp(\mu y)+K}{\exp(\mu y)}\right),
		\end{align*}
		which can be used to (partly) extend the continuous time results in \cite{RePEc:tkk:dpaper:dp9}.
\end{remark}

\subsection{Sum-the-odds}
We now come to a structurally different problem, which interestingly also fits well into our theory: the sum-the-odds theorem introduced in \cite{MR1797879}. The optimal stopping problem is the problem of maximizing the probability of stopping on the last success of a finite sequence of independent Bernoulli trials. More precisely, let $n\in\N$ be a fixed positive integer and $X_1,X_2,\dots,X_n$ be independent random variables with values in $\{0,1\}$. The random variables are not assumed to be identically distributed. We denote the success probabilities by $p_k=P(X_k=1)=1-P(X_k=0)=1-q_k$. To give a solution in our framework, we introduce a Markov chain on 
\[\{-n,n+1,\dots, -1,0,1,\dots,n-1,n\}\]
 with transition probabilities as follows: The states $n$ and $-n$ are absorbing and 
\begin{align*}
&P_k(Y_1=k+1)=p_{k+1}=1-P_k(Y_1=-k-1)\mbox{ for }k\geq 0,\\
&P_k(Y_1=-k+1)=p_{k+1}=1-P_k(Y_1=k-1)\mbox{ for }k< 0.
\end{align*}
In other words, if we let the process start in 0, then $Y_k\in\{-k,k\}$ and $Y_k=k$ corresponds to a success at time $k$. The reward function $g$ describing the problem is now given by $g(k)=0$ for $k\leq 0$ and 
\[g(k)=P_k(Y_{k+1}<0,\dots,Y_n<0)=\prod_{l=k+1}^{n}q_l\mbox{ for }k>0.\]
Furthermore, for $k\geq 1$
\[\phi(k)=E_kg(Y_{\tau_k})=\sum_{l=k+1}^{n}g(l)P_k(Y_{\tau_k}=l)=\sum_{l=k+1}^{n}g(l)p_l\prod_{r=k+1}^{l-1}q_r=\sum_{l=k+1}^{n}p_l\prod_{\substack{r=k+1\\ r\not=l}}^{n}q_r \]
and
\[P_k(\tau_k=\infty)=g(k).\]
Therefore,
\[f(k)=\frac{g(k)-\phi(k)}{P_k(\tau_k=\infty)}=1-\sum_{l=k+1}^{n}p_l\left(\prod_{\substack{r=k+1\\ r\not=l}}^{n}q_r\right)\left(\prod_{\substack{r=k+1}}^{n}q_r\right)^{-1}=1-\sum_{l=k+1}^{n}\frac{p_l}{q_l}.\]
The assumptions of Theorem \ref{thm:optimal} are obviously fulfilled, so that the optimal threshold $\alpha^*$ is given as the smallest $k\geq 1$ such that $f(k)\geq 0$, i.e.
\[\sum_{l=k+1}^{n}\frac{p_l}{q_l}\leq 1,\]
reproducing the odds-theorem (Theorem 1) in \cite{MR1797879}. Let us just mention that some of the generalisations surveyed in \cite{dendievel2012new} may be handled similarly. 

\subsection{Connection to other approaches}\label{subsec:other_approaches}
We now describe the connection of the approach described in this paper and methods going back to \cite{NS}, \cite{NS2} for random walks (and L\'evy processes in continuous time) and the generalization to general Markov processes on the real line in \cite{CST}. Note that \cite{CST} considers the continuous time setting. The adaption to discrete time is, however, straightforward, so that we use this without giving proofs, but refer to the more detailed discussion in \cite{doi:10.1080/07474946.2016.1275314}. The starting point is a representation of the reward function $g$ in terms of the running maximum. Using the notation
\[M_n=\sup_{m\leq n}Y_m,\]
an independent random variable $T$\ with\ geometric distribution with parameter $1-\rho$, the first step is to guess a function\ $\tilde f$ such that the lower semicontinuous reward function $g$ can be represented as
\begin{equation}\label{eq:repr_g}g(y)=E_y{\tilde f}(M_T).
\end{equation}
\cite{CST}, Theorem 1, states that \eqref{eq:M1} for $\tilde f$ instead of $f$ implies that 
\[\inf\{n\geq 0:Y_n>a^*\}\]
is optimal. Indeed, our function $f$ introduced in \ref{eq:f_def} and the representing function $\tilde f$ coincide:
\begin{align*}
E_x\rho^{\tau_y}g(Y_{\tau_y})&=E_x1_{\{\tau_y\leq T\}}E_{Y_{\tau_y}}({\tilde f}(M_T))=E_x1_{\{\tau_y\leq T\}}E_{x}({\tilde f}(M_T)|\mathcal{A}_{\tau_y})\\
&=E_x{\tilde f}(M_T)1_{\{\tau_y\leq T\}}=E_x{\tilde f}(M_T)1_{\{M_T>y\}},
\end{align*}
where we used the memoryless-property of $T$.
Therefore, for $g$ with representation of the form \eqref{eq:repr_g},
\[g(y)-\phi(y)=E_y{\tilde f}(M_T)-E_y{\tilde f}(M_T)1_{\{M_T>y\}}=f(y)P_y(M_T=y),\]
so that
\begin{equation*}
\tilde f(y)=\frac{1}{P_y(M_T=y)}(g(y)-\phi(y))=f(y).
\end{equation*}
This shows that Theorem \ref{thm:optimal} implies the results in the literature. The proofs there are, however, different in nature. The results for random walks (and L\'evy processes) are based on the Wiener-Hopf factorization and the results in the general Markov case heavily rely on representation results for excessive functions. 
Another main difference is that in this paper we give an explicit formula for the function $f$, which allows to obtain general results such as Theorem \ref{prop:RW_optimal}, whereas the existence of a representing function $\tilde{f}$ in \eqref{eq:repr_g} is not clear in general (but see the discussion in \cite{CST},\ Subsection 2.2 and -- for the L\'evy process case using Fourier techniques -- Lemma 3.1 in \cite{Su}).

Another related approach is described in \cite{woodroofe1994generalized}. For an underlying random walk with positive drift and no discounting, concave reward functions $g$ with a unique maximum were considered. This implies that $g$ is unbounded from below, so that we leave the setting of this paper. One main result is that in this situation, the optimal stopping time is of the form
\[\inf\{n\geq 0:Y_n>a^*\}.\]
The authors also use ascending ladder variables to obtain their result and to characterize $a^*$. In our notation, they use the fact that $\phi(y)-g(y)$ is decreasing in $y$ due to the concavity and monotonicity, establishing \eqref{eq:M}. Then,
$a^*$ is characterized analogously to our $\alpha^*$. The line of argument heavily relies on the Wiener-Hopf factorization, but a similarity to our proof of Theorem \ref{thm:optimal} can also be found.  

\section{Some remarks on problems in continuous time}\label{sec:cont_time}
The focus of this paper is on processes in discrete time as ideas can be explained without technical difficulties in this setting. Nonetheless, for continuous time processes, a similar approach can be used to tackle one-sided optimal stopping problems of the form
\begin{equation}\label{eq:reward_cont}
e^{-rt}g(Y_t),\;t\in[0,\infty),\;r\in[0,\infty),\;g:\R\rightarrow[0,\infty).
\end{equation}
 However, the notations have to be handled more carefully. One first problem is that, in general, new maxima do not only occur at discrete time points, so that we have to use the local time at the maximum instead. There are different approaches to the notion of local time for different classes of general Markov processes and semimartingales. To avoid this technical discussion and to sketch the ideas very clearly, we from now on assume  $Y$ to be a L\'evy process and use the concept of local time $L$ at the maximum as presented in \cite{kyp}. 
 For L\'evy processes $Y$, the ladder height process is defined by 
 \[H_s=Y_{L_s^{-1}}\mbox{ on }\{L_s^{-1}<\infty\}\mbox{ and }H_s=\Delta\mbox{ on }\{L_s^{-1}=\infty\} \]
where $L^{-1}_s$ denotes the right inverse, so that $L^{-1}$ is the ascending ladder time process. Then the process
\[(L^{-1}_s,H_s)_{s\geq 0}\]
is a -- potentially killed -- subordinator, in particular a nice Markov process and semimartingale. For all\ $y>0$, it holds that
\[\inf\{t\geq 0:Y_t\in I(y)\}=\inf\{t\geq 0:H_{L_t}\in I(y)\}=L^{-1}_{\gamma_{y}},\]
where $\gamma_y=\inf\{s:H_s\in I(y)\}$
as the stopping time is in the support of\ the measure $dL$. Assume again that the optimal stopping time for \eqref{eq:reward_cont} is of threshold-type for some $a^*$. Adapting the arguments in discrete time, we can now equivalently consider the optimal stopping problem for the process
\begin{equation}\label{eq:reward_local_time}
e^{-rL^{-1}_s}g(H_s),\;s\in[0,\infty).
\end{equation}
Here, one can hope for a monotone-type situation for \eqref{eq:reward_local_time}. However, the notion of monotone problems is more involved in the continuous time setting, see \cite{MR731723,MR533005,MR1030003}. We assume that the process in \eqref{eq:reward_local_time} has a Doob-Meyer-type decomposition of the form
\[e^{-rL^{-1}_s}g(H_s)=g(H_0)+M_s+\int_0^s\tilde{f}(H_u)dV_u,\;\;{s\geq 0},\]
where $(V_s)_{s\in[0,\infty)}$ is increasing. 
To illustrate the connection to the general Markov process theory, note in the case $r=0$ (for simplicity only) the following: If $g$ is in the domain of the extended generator of $H$, cf. \cite[VII.1]{RY},
then, by definition, there exists a function
${\tilde f}$ such 
that 
the process
$$g(H_s)-g(H_0)-\int_0^s \tilde{f}(H_u)du,\;\;{s\geq 0},$$
is a martingale for all initial states of\ $H_0$. Note that, when $g$ is in the domain of the infinitesimal generator $A$ of $H$, then $\tilde f=Ag$. Hence, $\tilde f$ is the continuous time analogue to $\phi-g$.
Coming back to the general setting, the stopping problem for reward process \eqref{eq:reward_local_time} is called monotone if 
\begin{align}\tag{$M_{cont}$}\label{eq:M_cont}
\tilde f(z)\leq 0\mbox{ implies }\tilde f(z')\leq 0\mbox{ for }z'>z>b.
\end{align}
Under the assumption corresponding to \eqref{eq:Opt}, this yields that the myopic stopping time 
\[\nu^*=\inf\{s\geq 0:H_s\in I(\alpha^*)\}, \]
with
	\[\alpha^*:=\{z>b:\tilde f(z)\leq 0\},\]
is optimal, see Appendix A in \cite{ChristensenIrle17}. This yields a result similar to \ref{thm:opt_threshold} in this continuous time setting. 

Using the ideas developed above, continuous time versions of Theorems \ref{thm:optimal} and \ref{prop:RW_optimal} could also be obtained. This, however, is technically more challenging. For not necessarily smooth reward functions, one major challenge is to find a representation of the form \eqref{eq:M_cont} to work with. In our situation, an approximation argument is however easy to carry out in order to take over the results to the continuous time setting. The following proof follows the line of argument in \cite{Bei98}:

	\begin{thm}
		Assume that $Y$ is a L\'evy process, $g$ is increasing and log-concave on $\{y:y>b\}$ and \eqref{eq:Opt} holds true. Then, there exists $\alpha^*>b$ such that			
		\[\tau^*=\inf\{t\geq 0:Y_t\geq \alpha^*\}\]
		is an optimal stopping time.
	\end{thm}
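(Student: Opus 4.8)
The plan is to reduce the continuous-time statement to the random-walk result of Theorem~\ref{prop:RW_optimal} by time-discretisation along a nested sequence of grids and then passing to the limit, in the spirit of \cite{Bei98}. Fix a null sequence $h_k=2^{-k}$ and consider, for each $k$, the sampled chain $Y^{(k)}_n:=Y_{nh_k}$. Since $Y$ is a L\'evy process, $Y^{(k)}$ is a random walk with i.i.d.\ increments $X^{(k)}_i=Y_{ih_k}-Y_{(i-1)h_k}$ and discount factor $\rho_k=e^{-rh_k}$. The decisive and clean observation is that the hypotheses on the reward are properties of $g$ alone and do not involve the law of the increments: $g$ is increasing and log-concave on $\{y:y>b\}$ for every $k$ simultaneously. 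Moreover \eqref{eq:Opt} for the continuous problem implies its analogue for each sampled chain, because $\sup_n\rho_k^{\,n}g(Y^{(k)}_n)\le\sup_{t\ge0}e^{-rt}g(Y_t)$ and $\rho_k^{\,n}g(Y^{(k)}_n)\to0$. Hence Theorem~\ref{prop:RW_optimal} applies verbatim at each scale and yields a threshold $\alpha^*_k>b$ together with an optimal threshold time $\tau^*_k=\inf\{nh_k:\,Y_{nh_k}\ge\alpha^*_k\}$ (with strict inequality under \eqref{eq:A2}), attaining the value $V_k(y):=\sup_{\tau\in h_k\N_0}E_ye^{-r\tau}g(Y_\tau)$.

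Next I would show $V_k\uparrow V$. Since the grids are nested, $V_k$ is non-decreasing in $k$ and $V_k\le V$. For the reverse inequality, approximate an arbitrary stopping time $\tau$ by $\tau_k:=\lceil\tau/h_k\rceil h_k\in h_k\N_0$; then $\tau_k\downarrow\tau$, and by right-continuity of the paths $Y_{\tau_k}\to Y_\tau$ and $e^{-r\tau_k}\to e^{-r\tau}$, while log-concavity makes $g$ continuous on $(b,\infty)$. The integrand is dominated by $\sup_{t}e^{-rt}g(Y_t)$, which is integrable by \eqref{eq:Opt}, so dominated convergence gives $E_ye^{-r\tau_k}g(Y_{\tau_k})\to E_ye^{-r\tau}g(Y_\tau)$ and therefore $\liminf_k V_k(y)\ge V(y)$. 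Hence $V_k\uparrow V$ pointwise.

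The remaining, and most delicate, step is to pass the thresholds and the threshold times to the limit. I would first show that $(\alpha^*_k)_k$ is bounded in $(b,\infty)$ and extract a convergent subsequence $\alpha^*_k\to\alpha^*$ (equivalently, identify the limit through the continuous-time function $\phi(y)=E_ye^{-r\tau_y}g(Y_{\tau_y})$, $\tau_y=\inf\{t>0:Y_t>y\}$, using that the discrete first ladder epochs $h_k\tau_+^{(k)}$ and the discrete overshoots converge to their continuous analogues as $h_k\to0$). With $\tau^*:=\inf\{t\ge0:Y_t\ge\alpha^*\}$, which is a stopping time because $[\alpha^*,\infty)$ is closed and the paths are right-continuous, one then argues $\tau^*_k\to\tau^*$ almost surely, the grid passage times of $[\alpha^*_k,\infty)$ converging, by right-continuity of the upward passage, to the continuous passage time of $[\alpha^*,\infty)$. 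Feeding this into the chain $V(y)=\lim_kV_k(y)=\lim_kE_ye^{-r\tau^*_k}g(Y_{\tau^*_k})=E_ye^{-r\tau^*}g(Y_{\tau^*})$, where the last equality again uses right-continuity and dominated convergence via \eqref{eq:Opt}, shows that $\tau^*$ is optimal and that $\alpha^*>b$.

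I expect the main obstacle to be precisely this last convergence of the thresholds and hitting times. Two effects must be controlled: the overshoot at the moment of passage, which for a general L\'evy process has a nondegenerate limiting law and governs whether the optimal set is $[\alpha^*,\infty)$ or $(\alpha^*,\infty)$ (the dichotomy \eqref{eq:A1}/\eqref{eq:A2}); and the possibility of creeping, where $Y$ reaches the level continuously so that $Y_{\tau^*}=\alpha^*$ and the grid approximations approach $\alpha^*$ without strictly exceeding it. Handling the creeping case, and ruling out a jump of the threshold in the limit, is where right-continuity of the paths and continuity of $g$ on $(b,\infty)$ must be used most carefully; everything else reduces to the random-walk theorem and routine dominated-convergence arguments.
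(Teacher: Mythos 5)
Your first two steps coincide with the paper's proof: sampling $Y$ along the nested dyadic grids, applying Theorem \ref{prop:RW_optimal} at each scale, and proving $V_k\uparrow V$ by discretising the optimal continuous-time stopping time and using right-continuity of the paths together with domination from \eqref{eq:Opt}. The genuine gap is in your final step, the one you yourself flag as delicate -- and the paper's proof shows it is not merely delicate but avoidable, because the paper never passes the stopping times to the limit at all. Your claim $\tau^*_k\to\tau^*$ a.s.\ is unproven and problematic: the grid passage times are not monotone in $k$ (refining the grid decreases them, raising the threshold increases them), so no elementary monotone-limit argument is available; where $Y$ creeps, identifying the limit of the passage times of $[\alpha^*_k,\infty)$ with that of $[\alpha^*,\infty)$ needs quasi-left-continuity, not just right-continuity; and even granting $\tau^*_k\to\sigma$, concluding $Y_{\tau^*_k}\to Y_\sigma$ along a possibly left approach requires more than c\`adl\`ag paths. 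Worse, your proposed identification of $\lim_k\alpha^*_k$ through the continuous-time function $\phi(y)=E_ye^{-r\tau_y}g(Y_{\tau_y})$ with $\tau_y=\inf\{t>0:Y_t>y\}$ breaks down for a general L\'evy process: whenever $0$ is regular for $(0,\infty)$ (e.g.\ any process of unbounded variation), one has $\tau_y=0$ $P_y$-a.s., hence $\phi\equiv g$ on $(b,\infty)$ and the myopic threshold condition becomes vacuous; this degeneracy is precisely why Section \ref{sec:cont_time} of the paper replaces discrete ladder epochs by local time at the maximum in continuous time, and why the discrete ladder quantities cannot simply be claimed to "converge to their continuous analogues."

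The fix, which is the paper's route, sidesteps hitting-time convergence entirely. Since the grids are nested, the discrete value functions $v_k$ are increasing in $k$, hence the continuation sets increase and the thresholds satisfy $b<\alpha^*_1\le\alpha^*_2\le\cdots$; no boundedness-plus-subsequence argument is needed -- set $\alpha^*=\lim_k\alpha^*_k$. (Note also that continuity of $g$ on $(b,\infty)$, which log-concavity gives you, together with \eqref{eq:Opt} puts every scale in case \eqref{eq:A1}, so the discrete stopping sets are the closed intervals $[\alpha^*_k,\infty)$; you left the \eqref{eq:A1}/\eqref{eq:A2} dichotomy open.) Then for $y\ge\alpha^*$ one has $v_k(y)=g(y)$ for all $k$, whence $v(y)=\lim_k v_k(y)=g(y)$ by your second step; for $y<\alpha^*$ there is $k$ with $g(y)<v_k(y)\le v(y)$. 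Thus the continuous-time stopping set $\{v=g\}$ is exactly $[\alpha^*,\infty)$, and optimality of $\tau^*=\inf\{t\ge0:Y_t\ge\alpha^*\}$ follows from the general theory under \eqref{eq:Opt} as the first entrance time into $\{v=g\}$ -- overshoot and creeping never enter the argument.
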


\begin{proof}
	For each $n\in\N$ we consider the discrete time process $Y^{(n)}$ given by 
	\[Y^{(n)}_{k}=Y_{2^{-n}k},\;\;k=0,1,2,\dots . \]
	With respect to the corresponding filtration $\A^{(n)}$ given by $\A^{(n)}_k=\A_{2^{-n}k}$, this process is a random walk. Therefore, the optimal stopping problem with reward process
	\[e^{-rk2^{-n}}g(Y^{(n)}_{k}),\;k=0,1,2,\dots \]
	falls within the framework of Theorem \ref{prop:RW_optimal}. As $g$ is continuous on $(b,\infty)$ and \eqref{eq:Opt} holds, we are in case \eqref{eq:A1}, so that there exist thresholds $b<\alpha_1, \alpha_2,\dots$ such that the optimal stopping sets for the discrete time processes are given by
	\[S_n=\{y:v_n(y)=g(y)\}=[\alpha_n,\infty).\]
	Here $v_n$ denotes the value function for the discrete time process. As the set of admissible stopping times is growing in $n$, so is $v_n$, and hence also the continuation sets. Therefore, we have $b<\alpha_1\leq \alpha_2\leq \dots$.

	Moreover, for each $y$, $v_n(y)$ converges to the value $v(y)$ of the continuous time problem as $n\rightarrow\infty$. Indeed, for each stopping time $\tau$ for the continuous time process, we can consider the discrete time approximation
	\[\tau_n=\inf\{k2^{-n}:\tau\leq k2^{-n}\}.\]
	Then, $\tau_n\searrow\tau$ and, as $Y$ has right continuous sample paths, $Y_{\tau_n}\rightarrow Y_\tau$, so that due to \eqref{eq:Opt} it holds that
	\[E_ye^{-r\tau_n}g(Y_{\tau_n})\rightarrow E_ye^{-r\tau}g(X_\tau).\]
	Applying this result to the optimal stopping time $\tau=\tau^*$ for the continuous process, we obtain 
	\[v(y)=E_ye^{-r\tau^*}g(Y_{\tau^*})=\lim_{n\rightarrow\infty}E_ye^{-r\tau_n}g(Y_{\tau_n})\leq \lim_{n\rightarrow\infty}v_n(y)\leq v(y),\]
	proving the convergence of the value functions. 
We now write $\alpha^*=\lim_{n\rightarrow\infty}\alpha_n.$ Then for all $y\geq \alpha^*$, 
\[v(y)=\lim_{n\rightarrow\infty}v_n(y)=g(y),\]
so that $y$ is in the stopping set for the continuous time problem. On the other hand, for $y<\alpha^*$ there exists $n\in\N$ such that
\[g(x)<v_n(x)\leq v(x). \]
Hence, $[\alpha^*,\infty)$ is the stopping set for the continuous time problem and the threshold time for $\alpha^*$ is optimal due to the general theory. 
\end{proof}

\appendix
\section{Appendix}
\subsection{An illustration for an elementary first step}\label{sec:appendix_illust}
This paper starts from the observation that for many well-known stopping problems one can show by elementary methods that optimal stopping times are of threshold-type. Here, we want to give a short illustration what we mean by the term \emph{elementary}. To show that the optimal stopping set is of the type $[a,\infty)$ or $(a,\infty)$ for some $a\in\R$ we have to show that for $y'>y$ in the range of interest
\[V(y)\leq g(y)\mbox{ implies }V(y')\leq g(y').\]
Let us illustrate this for the Shepp-Shiryaev problem, following \cite{baurdoux2013direct}. In the setting of Subsection \ref{subsec:Shepp_shi} we have
\begin{align*}
	V(y)&=\sup_{\tau}E\rho^\tau\exp(\max\{y,S_0,\dots,S_\tau\})\\
	&=E\rho^{\tau^*_y}\exp(\max\{y,S_0,\dots,S_{\tau^*_y}\}),
\end{align*}
where ${\tau^*_y}$ denotes an optimal stopping time with respect to $y\geq 0$. Let $y'>y\geq 0$, ${\tau^*}={\tau^*_{y'}}$. Then
\begin{align*}
V(y')-V(y)&\leq E\rho^{\tau^*}\exp(\max\{y',S_0,\dots,S_{\tau^*}\})-E\rho^{\tau^*}\exp(\max\{y,S_0,\dots,S_{\tau^*}\})\\
&=E\rho^{\tau^*}\big(\exp(\max\{y',S_0,\dots,S_{\tau^*}\})-\exp(\max\{y,S_0,\dots,S_{\tau^*}\})\big)\\
&\leq E\rho^{\tau^*}(e^{y'}-e^{y})\leq e^{y'}-e^{y}=g(y')-g(y).
\end{align*}
So,
\[V(y')-g(y')\leq V(y)-g(y)\mbox{ for }y'>y\geq 0\]
and 
\[V(y)\leq g(y)\mbox{ implies }V(y')\leq g(y').\]
We refer to \cite{CIN} and \cite{baurdoux2013direct} for similar arguments of this type, and to Theorem \ref{thm:optimal} of this paper for a more sophisticated and general result. 

\subsection{On property \eqref{eq:Opt} in the Shepp-Shiryaev problem}\label{sec:appendix_opt_ss_problem}
In the setting of Subsection \ref{subsec:Shepp_shi}, we now investigate the validity of condition \eqref{eq:Opt}, so we look at \[E_Q\sup_nr^ne^{Y^y_n}=E\sup_n\rho^ne^{\max\{y,S_0,\dots,S_n\}}.\]
 The main tools will be the results from \cite{sgibnev1997submultiplicative}. 
Let us just mention that the case $EX_i\geq 0$ can be treated more elementary. 

Taking\ $y=0$ w.l.o.g., we have
\[E\sup_n\rho^n\exp({\max_{i\leq n}S_i})\leq E\sup_n\max_{i\leq n}\rho^i\exp(S_i)=E\sup_{i\geq 0}\rho^i\exp(S_i)=E\exp(\sup_{i\geq 0}S_i'), \]
say, with\ $S'_i=\sum_{j=1}^iX_j',\;X_j'=X_j+\log(\rho).$
Here, 
\[E\exp(X_j')=E\exp(X_j)\rho=\mu\rho<1\]
and
\[EX_j'\leq \log E\exp(X_j')\leq \log(\mu\rho)<0.\]
These are the two conditions to apply Theorem 2 in \cite{sgibnev1997submultiplicative}, which yields
\[E\exp(\sup_{i\geq 0}S_i')<\infty.\]

To show the second property in \eqref{eq:Opt} just choose $\rho'$ with $\rho<\rho'<1,\,\rho'\mu<1$. Then with $r'=\rho'\mu<1$
\[r^n\exp(Y_n^y)=\left(\frac{\rho}{\rho'}\right)^n(r')^n\exp(Y_n^y)\]
implying
\[r^n\exp(Y_n^y)\rightarrow 0\;\;Q-\mbox{a.s.}\]
since $\frac{\rho}{\rho'}<1$ and $E_Q\sup_n(r')^n\exp(Y_n^y)<\infty$ as before.

\bibliographystyle{abbrvnat}
\bibliography{Irle_monoton_eindim}

\end{document}